\newtheorem{theorem}{Theorem}[section]
\theoremstyle{definition}
\newtheorem{corollary}[theorem]{Corollary}
\newtheorem{lemma}[theorem]{Lemma}
\newtheorem{proposition}[theorem]{Proposition}
\newtheorem{question}[theorem]{Question}
\newtheorem{definition}[theorem]{Definition}
\newtheorem{example}[theorem]{Example}
\newtheorem{remark}[theorem]{Remark}
\numberwithin{equation}{section}
\newcommand{\Aut}{\operatorname{Aut}}
\newcommand{\End}{\operatorname{End}}
\newcommand{\im}{\operatorname{Im}}
\newcommand{\Inn}{\operatorname{Inn}}
\newcommand{\id}{\mathrm{id}}
\title{Relative Rota--Baxter operators on groups and \\ Hopf algebras
}
\author[Valeriy G.~Bardakov,  Igor M.~Nikonov]{Valeriy G.~Bardakov,  Igor M.~Nikonov}
\date{\today}
\begin{document}
\sloppy



\maketitle
\begin{abstract}
 M.~Goncharov  introduced and studied a Rota--Baxter operator on a cocommutative Hopf algebra.
In the present paper  we define relative  Rota--Baxter operators on an arbitrary Hopf algebra. A particular case of this definition is Goncharov's operator.
On a Hopf algebra with a relative Rota--Baxter operator we define new associative operation and construct a new Hopf algebra and Hopf brace.
Further, we construct Rota--Baxter operators  of integer weights on some groups. The question on a possibility to define operator of zero weight on groups was formulated by X. Gao, L. Guo, Y. Liu, and Z.-C. Zhu. 
In the last section we construct a family of two generated Hopf algebras.  This family includes some known Hopf algebras, in particular,  4-dimensional Sweedler algebra $H_4$.

 \textit{Keywords:} Hopf algebra, cocommutative Hopf algebra,  Hopf brace, Sweedler algebra, group, Rota--Baxter operator, relative Rota--Baxter operator.

 \textit{Mathematics Subject Classification 2020: 16T25, 20N99}
\end{abstract}

\maketitle
\tableofcontents

\section{Introduction}

Rota--Baxter groups were introduced in 2020 by L.~Guo, H.~Lang, and Y.~Sheng~\cite{Guo2020}.
A~\emph{Rota--Baxter group} is a group~$G$ endowed with a~map $B\colon G\to G$
satisfying the identity
$$
B(g)B(h) = B( g B(g) h B(g)^{-1} ),
$$
where $g,h\in G$.
This notion appeared as a group analogue of Rota--Baxter operators
defined on an algebra.
Rota--Baxter operators on algebras are known since the middle of the previous
century~\cite{Baxter,Tricomi} and they have in turn connections
with mathematical physics (classical and quantum Yang--Baxter equations),
number theory, operad theory, Hopf algebras,
combinatorics et cetera, see the monograph~\cite{GuoMonograph}.
After the initial work~\cite{Guo2020},
the study of Rota--Baxter groups have been continued in~\cite{BG2, BG1, Goncharov2020,JSZ}.

Further relative Rota--Baxter operators on groups were defined~\cite{JSZ}, which generalize Rota--Baxter operators on groups, and their connections with relative Rota--Baxter operators on algebras were found.
Let $G$ and $H$ be groups, and $\Psi\colon G\rightarrow \Aut(H)$ an action of $G$ on $H.$ A~map $B\colon H\rightarrow G$ is called a \emph{relative Rota-Baxter operator} on $H$ with respect to the action $\Psi$ if
$$
B(h)B(k)=B\bigl(h\, \Psi_{B(h)}(k)\bigr),~~~h, k \in H.
$$
In this case the quadruple $(H, G, \Psi, B)$ is said to be a \emph{relative Rota-Baxter group} (RRB-group).

If $H = G$ and
$$
\Psi \colon G \to \Inn(G), ~~\Psi_g = \Psi(g) \colon x \mapsto g x g^{-1},~~ x \in G,
$$
then the RRB-operator satisfies the identity
$$
B(h)B(k)= B\bigl(h\, \Psi_{B(h)}(k)\bigr) = B\bigl(h \, B(h) \, k \, B(h)^{-1}\bigr),  ~~h, k \in G,$$
and is the usual RB-operator on $G$ and $(G, B)$ is a RB-group.

In \cite{JSZ} was proved that if
 $H$ and $G$ are groups and $\Psi \colon G \to \Aut(H)$ is an action of $G$ on $H$. Then a map $B \colon H \to G$
 is a RRB-operator if and only if the set
$$
Gr(B) = \{ (B(h), h)~|~h \in H \}
$$
is a subgroup of the semi-direct product $H \rtimes_{\Psi} G$.
The map $B \colon (H, \circ_B) \to G$  is a group homomorphism and $(H, \circ_B) \cong Gr(B)$ as groups via the the map
$h \mapsto (B(h), h)$.
In \cite{BG2} was found connection between  Rota--Baxter groups and skew left braces. Similar result for relative Rota--Baxter groups was found in \cite{BGST, RS-23}.

In~\S~\ref{Prel}, we state the required preliminaries on Hopf algebras, Rota--Baxter operators on algebras  and on  groups, relative Rota--Baxter operators on  groups.

 M.~Goncharov \cite{Goncharov2020} introduced and studied a Rota--Baxter operator on a cocommutative Hopf algebra
that generalizes the notions of a Rota--Baxter operator on a group and a Rota--Baxter operator of weight 1 on a Lie algebra.
In the present paper  we define relative  Rota--Baxter operators and Rota---Baxter operators on an arbitrary Hopf algebra. A particular case of this definition is Goncharov's operator. On a Hopf algebra with a relative Rota--Baxter operator we define a new binary operation  and find conditions under which this operation is associative.
Using this operation we construct a new Hopf algebra and a Hopf brace (see Section  \ref{RRBO}).

On arbitrary group it is possible to define Rota--Baxter operators of weight -1 and 1. In~\S~\ref{weight} we construct Rota--Baxter operators of integer weights on some groups. The question on a possibility to define operators of zero weight on groups was formulated in \cite[p. 4]{GGLZ}.

In the last section we construct a family of two generated Hopf algebras.  This family includes some known Hopf algebras, in particular,  4-dimensional Sweedler algebra $H_4$. We find automorphism group for some of these algebras.

\bigskip


\section{Preliminaries} \label{Prel}

\subsection{Hopf algebras} Recall some facts from Hopf theory (see, for example, \cite[Chapter~11]{T}).

 Let $A$ be an associative algebra with unit $1_A$ over a commutative ring with unit, $\Bbbk$. Assume that $A$ is provided with multiplicative
$\Bbbk$-linear homomorphisms
$$
\Delta \colon A \to A^{\otimes 2} = A \otimes_{\Bbbk} A~\mbox{and}~\varepsilon \colon A \to \Bbbk,
$$
called
the comultiplication and the counit respectively, and a $\Bbbk$-linear map $S \colon A \to A$, called the antipode. It is understood that $\Delta(1_A) = 1_A \otimes 1_A$ and $\varepsilon(1_A) = 1$. The tuple $(A, \Delta, \varepsilon, S)$ is said to be a \emph{Hopf algebra} if these
homomorphisms satisfy together with the multiplication $m \colon A \times A \to A$ the following identities:
\begin{equation}\label{1.1.a}
(\id_A  \otimes \Delta) \Delta  = (\Delta \otimes \id_A) \Delta,
\end{equation}
\begin{equation}\label{1.1.b}
m (S \otimes \id_A ) \Delta  = m (\id_A \otimes S) \Delta = \varepsilon \cdot 1_A,
\end{equation}
\begin{equation}\label{1.1.c}
(\varepsilon \otimes \id_A ) \Delta  = (\id_A \otimes  \varepsilon) \Delta = \id_A.
\end{equation}
Note that to  write down the first equality we identify $(A \otimes A) \otimes A = A \otimes (A \otimes A)$ via $(a \otimes b) \otimes c = a \otimes (b \otimes c)$, where $a, b, c \in A$.  Similarly, to write down the third equality, we identify $\Bbbk \otimes A = A \otimes \Bbbk = A$ via $1 \otimes a = a \otimes 1 = a$. The axioms imply that the antipode $S$ is an antiautomorphism of both the
algebra and the coalgebra structures in~$A$. This means that
$$
m (S \otimes S) = S \circ m \circ P_A \colon A^{\otimes 2} \to A,~~~~P_A (S \otimes S) \Delta = \Delta  \circ S  \colon A \to A^{\otimes 2},
$$
where $P_A$ denotes the flip
$$
a \otimes b \mapsto b \otimes a \colon A^{\otimes 2} \to A^{\otimes 2}.
$$
 It also follows from the axioms that $S(1_A) = 1_A$ and $\varepsilon \circ S = \varepsilon \colon A \to \Bbbk$.

\begin{example}
\begin{enumerate}
\item A group ring $\Bbbk[G]$ is a Hopf algebra if the homomorphisms $\Delta, \varepsilon, S$  are defined on the additive generators $g \in  G$ by the formulas
$$
\Delta(g) = g \otimes g,~~ \varepsilon(g) = 1,~~ S(g) = g^{-1}.
$$

\item The universal enveloping algebra $U(\mathfrak{g})$ of a Lie algebra $\mathfrak{g}$ is a Hopf algebra, if the homomorphisms $\Delta, \varepsilon, S$  are defined on the multiplicative  generators $g \in \mathfrak{g}$ by the formulas
$$
\Delta(g) = g \otimes 1 + 1 \otimes g,~~ \varepsilon(g) = 0,~~ S(g) = -g.
$$
\item The famous non-cocommutative 4-dimension Sweedler  algebra $H_4$ as algebra is generated by two elements $x$, $g$ with multiplication
$$
g^2 = 1,~~x^2 = 0,~~x g = -g x.
$$
Comultiplication, counit and antipod are defined by the rules
$$
\Delta(g) = g \otimes g,~~\Delta(x) = x \otimes 1 + g \otimes x,~~ \varepsilon(g) = 1,~~ \varepsilon(x) = 0,~~ S(g) = g^{-1} = g,~~S(x) = -g x.
$$
The antipode $S$ has order 4 and for any $a \in H_4$ we have $S^2(a) = g a g^{-1}$.
\end{enumerate}
\end{example}

 Throughout the paper  we shall use the
Sweedler's notation with superscripts for all the comultiplications,
$$
\Delta(h) = h_{(1)} \otimes h_{(2)},~~~h \in H.
$$

\subsection{Rota--Baxter operators}
Rota--Baxter operators for commutative algebras first appeared in the paper of G.~Baxter~\cite{Baxter}.
For basic results and the main properties of Rota--Baxter algebras see~\cite{GuoMonograph}.

\begin{definition}
Let $A$ be an algebra over a field~$\Bbbk$. A linear operator $R$ on $A$ is called
a \emph{Rota--Baxter operator of weight~$\lambda\in \Bbbk$} if
\begin{equation}\label{RBAlgebra}
R(x)R(y) = R( R(x)y + xR(y) + \lambda xy ).
\end{equation}
for all $x,y\in A$.
An algebra endowed with a Rota--Baxter operator is called a~\emph{Rota--Baxter algebra}.
\end{definition}

In \cite{Guo2020}  the Rota--Baxter operator on  groups was defined.

\begin{definition}
Let $G$ be a group.
\begin{enumerate}
\item A map $B \colon G\to G$ is called a {\it Rota--Baxter operator of weight~1} if
\begin{equation}\label{RB}
B(g)B(h) = B( g B(g) h B(g)^{-1} )
\end{equation}
for all $g,h\in G$.
\item A map $C \colon G\to G$ is called a {\it Rota--Baxter operator of weight~$-1$} if
$$
C(g) C(h) = C( C(g) h C(g)^{-1} g )
$$
for all $g,h\in G$.

\end{enumerate}
\end{definition}

There is a~bijection between Rota--Baxter operators of weights~1 and $-1$ on a~group~$G$. We will call Rota--Baxter
operators of weight~1 simply Rota--Baxter operators (RB-operators). A group endowed with a Rota--Baxter operator is called
a~{\it Rota--Baxter group} (RB-group).

In the  paper \cite{Guo2020}
it was proved, that if $(G, B)$ is a Rota--Baxter Lie group, then the tangent map of $B$
at the identity is a Rota--Baxter operator of weight 1 on the Lie algebra of the Lie group~$G$.

The next lemmas easy follows from the definition.

\begin{lemma}\label{lem:elementary}
If $B\colon G \to G$ is a Rota--Baxter operator on a group $G$, then

a) $B(e) = e$;

b) $B(g)B(g^{-1}) = B([g^{-1},B(g)^{-1}])$;

c) $B(g)B(B(g)) =  B(g B(g))$;

d) If $B(g) = e$ for some non-trivial $g\in G$, then $B(h) = B(gh)$ for any $h\in G$,

e) $B(g)^{-1} = B(B(g)^{-1}g^{-1}B(g))$ for any $g \in G$.
\end{lemma}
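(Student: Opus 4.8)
The plan is to obtain all five identities by specializing the defining relation \eqref{RB}, namely $B(g)B(h)=B\bigl(gB(g)hB(g)^{-1}\bigr)$, to well-chosen values of $g$ and $h$, and then to simplify the resulting group words by cancellation. Part~a) will play the role of an auxiliary fact for several of the remaining items, so I would prove it first. Putting $h=e$ in \eqref{RB} gives $B(g)B(e)=B\bigl(gB(g)B(g)^{-1}\bigr)=B(g)$ for every $g$; cancelling the invertible element $B(g)$ (for instance with $g=e$) yields $B(e)=e$.

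The items b), c), d) are then single substitutions. For b) I would take $h=g^{-1}$: the argument on the right becomes $gB(g)g^{-1}B(g)^{-1}$, which is exactly the commutator $[g^{-1},B(g)^{-1}]$ in the paper's convention, so $B(g)B(g^{-1})=B\bigl([g^{-1},B(g)^{-1}]\bigr)$. For c) I would take $h=B(g)$, so that $gB(g)hB(g)^{-1}=gB(g)B(g)B(g)^{-1}=gB(g)$, giving $B(g)B(B(g))=B(gB(g))$. For d) I would use the hypothesis $B(g)=e$ in \eqref{RB} with first entry $g$: the right-hand argument collapses to $g\,e\,h\,e=gh$ while the left-hand side is $e\cdot B(h)=B(h)$, hence $B(h)=B(gh)$ (the non-triviality of $g$ is only there to make the statement interesting and is not needed for the computation).

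The most delicate item is e), where one has to produce the correct preimage of $B(g)^{-1}$; this is the step I expect to require the key idea rather than routine algebra. I would substitute $h=B(g)^{-1}g^{-1}B(g)$ into \eqref{RB}. A short telescoping computation shows that the argument $gB(g)\bigl(B(g)^{-1}g^{-1}B(g)\bigr)B(g)^{-1}$ reduces to $e$, so the right-hand side equals $B(e)=e$ by part~a), and therefore $B(g)B\bigl(B(g)^{-1}g^{-1}B(g)\bigr)=e$, which is the assertion. Conceptually this is transparent from the description recalled in the introduction: the element $B(g)^{-1}g^{-1}B(g)$ is the inverse of $g$ in the group $(G,\circ_B)$ with $g\circ_B h=gB(g)hB(g)^{-1}$, and since $B\colon(G,\circ_B)\to G$ is a group homomorphism it must send $\circ_B$-inverses to inverses; the explicit substitution above is precisely the verification of this fact, so it can be presented either way.
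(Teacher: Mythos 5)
Your proof is correct: each part follows by the substitution you indicate into the defining identity \eqref{RB} (with the commutator convention $[a,b]=a^{-1}b^{-1}ab$ for part~b), and the cancellations check out, including the key choice $h=B(g)^{-1}g^{-1}B(g)$ in part~e). This is exactly the verification the paper has in mind—it states only that the lemma ``easily follows from the definition'' and omits the details—so your argument matches the intended approach.
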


\begin{lemma}\label{KerImSub}
Let $B$ be a Rota--Baxter operator on a group~$G$. Then

a) $\ker B$ is a subgroup of $G$,

b) $\im B$ is a subgroup of $G$.
\end{lemma}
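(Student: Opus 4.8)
The plan is to check the three subgroup axioms---containing the identity, closure under multiplication, and closure under inversion---for each of the two sets, drawing almost entirely on the elementary identities recorded in Lemma~\ref{lem:elementary}. In both cases the identity element $e = B(e)$ lies in the set by part~(a) of that lemma, so the substance is in the two closure statements.

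For part~(a), the crucial input is Lemma~\ref{lem:elementary}(d): if $g \in \ker B$ then $B(gh) = B(h)$ for every $h \in G$. Feeding this a second kernel element $h \in \ker B$ yields $B(gh) = B(h) = e$, which gives closure under products; feeding it $h = g^{-1}$ yields $B(g^{-1}) = B(gg^{-1}) = B(e) = e$, which gives closure under inverses. Hence $\ker B$ is a subgroup. One should note that the hypothesis of~(d) is stated for non-trivial $g$, but the conclusion is vacuously valid when $g=e$, so no case is lost.

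For part~(b), closure under multiplication is immediate from the defining identity~\eqref{RB}: for any $g,h\in G$ the product $B(g)B(h)$ equals $B(gB(g)hB(g)^{-1})$, which is again a value of $B$. Closure under inversion is handed to us by Lemma~\ref{lem:elementary}(e), which exhibits $B(g)^{-1}$ as $B(B(g)^{-1}g^{-1}B(g)) \in \im B$. Therefore $\im B$ is a subgroup.

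There is no genuine obstacle here: the entire content has been front-loaded into Lemma~\ref{lem:elementary}, and the present statement is a bookkeeping consequence of it. If one preferred a self-contained argument, the only step requiring the full Rota--Baxter relation rather than a bare group computation is the closure of $\im B$ under inversion, so that is where I would concentrate any independent verification.
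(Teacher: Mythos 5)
Your proof is correct, and it follows exactly the route the paper intends: the paper gives no explicit proof of this lemma (it merely remarks that it ``easy follows from the definition''), and your argument is the natural filling-in, deriving closure of $\ker B$ from Lemma~\ref{lem:elementary}(a),(d) and closure of $\im B$ from the defining identity~\eqref{RB} together with Lemma~\ref{lem:elementary}(e). Your side remark that the non-triviality hypothesis in part~(d) costs nothing is a worthwhile touch of care.
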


The next proposition shows that a  Rota--Baxter group possesses another group multiplication.

\begin{proposition}[\cite{Guo2020}]\label{prop:Derived}
Let $(G, \cdot ,B)$ be a Rota--Baxter group.

a) The pair $(G, *)$, with the multiplication
\begin{equation}\label{R-product}
g*h = gB(g)hB(g)^{-1},
\end{equation}
where $g,h\in G$, is also a group.

b) The operator $B$ is a Rota--Baxter operator on the group $(G, *)$.

c) The map $B\colon(G, *)\to (G, \cdot)$ is a homomorphism of Rota--Baxter groups.
\end{proposition}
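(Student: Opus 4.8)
The plan is to isolate from the defining identity \eqref{RB} the one structural fact that drives all three parts. Reading off \eqref{R-product}, we have $g\ast h=gB(g)hB(g)^{-1}$, so applying $B$ and using \eqref{RB} gives $B(g\ast h)=B\bigl(gB(g)hB(g)^{-1}\bigr)=B(g)B(h)$. Thus the very first thing I would record is the multiplicativity relation
$$
B(g\ast h)=B(g)\,B(h),\qquad g,h\in G,
$$
proved directly from \eqref{RB} \emph{before} knowing that $(G,\ast)$ is a group. This relation is the engine for everything that follows.

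For part a) I would check the group axioms one at a time. Associativity is the only computation of any length: writing $u=g\ast h=gB(g)hB(g)^{-1}$ and substituting $B(u)=B(g)B(h)$ from the multiplicativity relation yields
$$
(g\ast h)\ast k=u\,B(u)\,k\,B(u)^{-1}=gB(g)\,h\,B(h)\,k\,B(h)^{-1}B(g)^{-1},
$$
while expanding $g\ast(h\ast k)=gB(g)\bigl(hB(h)kB(h)^{-1}\bigr)B(g)^{-1}$ produces the same word; the conjugating factor $B(u)=B(g)B(h)$ is exactly what makes the two bracketings coincide. The identity is $e$, since $B(e)=e$ by Lemma \ref{lem:elementary}(a) forces $e\ast g=g$ and $g\ast e=g$. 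For inverses I would propose $g'=B(g)^{-1}g^{-1}B(g)$: direct substitution gives $g\ast g'=gB(g)\cdot B(g)^{-1}g^{-1}B(g)\cdot B(g)^{-1}=e$, and to see this is two-sided I would invoke Lemma \ref{lem:elementary}(e), which says precisely that $B(g')=B(g)^{-1}$, whence $g'\ast g=g'\,B(g')\,g\,B(g')^{-1}=e$ as well.

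Parts b) and c) then follow almost formally. For c), the multiplicativity relation $B(g\ast h)=B(g)B(h)$ is exactly the assertion that $B\colon(G,\ast)\to(G,\cdot)$ is a group homomorphism; the extra compatibility required of a morphism of Rota--Baxter groups is the identity $B\circ B=B\circ B$, which is vacuous because the operator on source and target is the same map $B$. For b) I would use that a homomorphism sends inverses to inverses: if $w$ denotes the $\ast$-inverse of $B(g)$, then $B(w)=B(B(g))^{-1}$, and applying the multiplicativity relation to the Rota--Baxter word for $B$ on $(G,\ast)$ gives
$$
B\bigl(g\ast B(g)\ast h\ast w\bigr)=B(g)\,B(B(g))\,B(h)\,B(B(g))^{-1}=B(g)\ast B(h),
$$
which is precisely identity \eqref{RB} for $B$ on $(G,\ast)$.

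I do not expect a serious obstacle: once the multiplicativity relation is isolated, the content is essentially bookkeeping and cancellation in $G$. The single place needing a genuine (if small) input beyond direct computation is the verification that the candidate inverse $g'$ is two-sided, which is exactly where Lemma \ref{lem:elementary}(e) enters; everything else is mechanical.
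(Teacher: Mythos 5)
The paper itself gives no proof of this proposition --- it is quoted directly from \cite{Guo2020} --- so your argument must stand on its own merits, and it does: the multiplicativity relation $B(g\ast h)=B(g)B(h)$, extracted from \eqref{RB} before the group axioms are known, correctly drives all three parts, and each verification (associativity via substituting $B(g\ast h)=B(g)B(h)$ into the conjugating factor, the two-sided inverse via Lemma~\ref{lem:elementary}(e), and the reduction of b) and c) to the homomorphism property) is complete and error-free. This is essentially the same route as the standard proof in the cited source \cite{Guo2020}.
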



\subsection{Relative Rota-Baxter operators}

The next definition was introduced in ~\cite{JSZ}.

\begin{definition}[{\cite{JSZ}}]
A \emph{relative Rota--Baxter group} is a quadruple $(H,G,\Phi, B)$, where $H$ and $G$
are groups, $\Phi\colon G\to \Aut(H)$ a group homomorphism and $B\colon H\to G$ is a map satisfying the condition
\begin{equation}\label{eq:relative_group_RB}
B(h_1)B(h_2)=B(h_1\Phi_{B(h_1)}(h_2))
\end{equation}
for all $h_1, h_2\in H$.
The map $B$ is referred as the \emph{relative Rota--Baxter operator} on $H$.
\end{definition}

\begin{example}
If $H=G$ and $\phi=Ad$, i.e. $\phi_g(h)=ghg^{-1}$ for $g, h \in H$, we get the usual Rota--Baxter group.
\end{example}

The next definition is analogous to the previous one.

\begin{definition}[{\cite{JSZ}}]
Let $\phi\colon\mathfrak g\to Der(\mathfrak h)$ be an action of a Lie algebra $(\mathfrak g, [\cdot,\cdot]_{\mathfrak g})$ on a Lie algebra
$(\mathfrak h, [\cdot,\cdot]_{\mathfrak h})$. A linear map $B\colon\mathfrak h\to\mathfrak g$ is called a \emph{relative Rota--Baxter operator of weight $\lambda \in \Bbbk$} on $\mathfrak g$
with respect to $(\mathfrak h; \phi)$ if
\begin{equation}\label{eq:relative_Lie_RB}
[B(u),B(v)]_{\mathfrak g} = B\left( \phi(B(u))v-\phi(B(v))u+\lambda[u,v]_{\mathfrak h} \right), \quad \mbox{for~all}~ u,v\in\mathfrak h.
\end{equation}

\end{definition}

\begin{example}\label{ex:lambda_lie_RB}
Let $\mathfrak h=\mathfrak g$, $[\cdot,\cdot]_{\mathfrak h}=\lambda[\cdot,\cdot]_{\mathfrak g}$ and $\phi(u)(v)=[u,v]_{\mathfrak g}$, $u,v\in\mathfrak g$. Then the relative Rota--Baxter relation (of weight $1$) looks like
\begin{equation}\label{eq:lambda_lie_RB}
[B(u),B(v)]_{\mathfrak g}=B([B(u),v]_{\mathfrak g}-[B(v),u]_\mathfrak g+\lambda[u,v]_{\mathfrak g})
\end{equation}
and coincide with the definition of Rota--Baxter operator of weight $\lambda$ on $\mathfrak g$.
\end{example}

A connection between relative Rota--Baxter operators on Lie groups and relative Rota--Baxter operators on the corresponding Lie algebras is described by the following theorem.

\begin{theorem}[{\cite[Theorem 4.1]{JSZ}}]\label{thm:differetial_relative_RB}
Let $G$, $H$ be Lie groups and $\mathfrak g$, $\mathfrak h$ their Lie algebras.
Let $\mathcal B\colon H \to G$ be a relative Rota--Baxter operator on $G$ with respect to an action $(H, \Phi)$. Define $B\colon \mathfrak h\to\mathfrak g$ by
\[
B = {\mathcal B}_{*1_H}
\]
which is the tangent map of $\mathcal B$ at the identity $1_H$. Then $B$ is a relative Rota--Baxter operator of weight $1$ on $\mathfrak g$ with respect to the action $(\mathfrak h, \phi)$, where $\phi\colon\mathfrak g\to Der(\mathfrak h)$ is the differentiated action of $\Phi$.
\end{theorem}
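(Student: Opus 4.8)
My preferred approach is conceptual, via the graph characterisation rather than a brute-force expansion. The result of \cite{JSZ} recalled above says that $\mathcal B\colon H\to G$ is a relative Rota--Baxter operator exactly when its graph $Gr(\mathcal B)=\{(\mathcal B(h),h) : h\in H\}$ is a subgroup of the semidirect product $G\ltimes_\Phi H$. The first step is to pass to the smooth category: since $\mathcal B$ is smooth, the map $h\mapsto(\mathcal B(h),h)$ is a smooth injective immersion which is a homeomorphism onto its image (projection to the second factor is a continuous inverse), so $Gr(\mathcal B)$ is an \emph{embedded} Lie subgroup of $G\ltimes_\Phi H$. Applying the Lie functor, its Lie algebra is a Lie \emph{subalgebra} of the Lie algebra of $G\ltimes_\Phi H$, namely the semidirect product $\mathfrak g\ltimes_\phi\mathfrak h$, where $\phi$ is the differentiated action.

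The second step is to identify this subalgebra. Differentiating $h\mapsto(\mathcal B(h),h)$ at $1_H$ shows that the Lie algebra of $Gr(\mathcal B)$ is precisely the graph $\{(B(a),a) : a\in\mathfrak h\}$ of $B=\mathcal B_{*1_H}$. In $\mathfrak g\ltimes_\phi\mathfrak h$ the bracket is $[(X,a),(Y,b)] = \big([X,Y]_{\mathfrak g},\ \phi(X)b-\phi(Y)a+[a,b]_{\mathfrak h}\big)$, so for $u,v\in\mathfrak h$ we get $[(B(u),u),(B(v),v)] = \big([B(u),B(v)]_{\mathfrak g},\ \phi(B(u))v-\phi(B(v))u+[u,v]_{\mathfrak h}\big)$. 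Demanding that this again lie in the graph means its $\mathfrak g$-component equals $B$ applied to its $\mathfrak h$-component, which is exactly the weight-$1$ relation \eqref{eq:relative_Lie_RB}. Hence $B$ is a relative Rota--Baxter operator of weight $1$. The real work on this route is the first step: one must genuinely verify that $Gr(\mathcal B)$ is an embedded Lie subgroup and that the Lie algebra of $G\ltimes_\Phi H$ is $\mathfrak g\ltimes_\phi\mathfrak h$ with this bracket, i.e.\ that $\phi$ is the correct infinitesimal datum for the action.

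If one prefers to differentiate \eqref{eq:relative_group_RB} directly, I would substitute $h_1=\exp_H(su)$, $h_2=\exp_H(tv)$, take $\log_G$ of both sides, and compare the coefficients of $st$. Using $\mathcal B(1_H)=1_G$ (which follows from \eqref{eq:relative_group_RB} with $h_1=h_2=1_H$) and the Baker--Campbell--Hausdorff formula, the left-hand side $\mathcal B(\exp_H su)\,\mathcal B(\exp_H tv)$ contributes $\tfrac12[B(u),B(v)]_{\mathfrak g}$. On the right-hand side, since each $\Phi_g$ commutes with $\exp_H$ and $g\mapsto(\Phi_g)_{*1_H}$ differentiates to $\phi$, the argument $\exp_H(su)\,\Phi_{\mathcal B(\exp_H su)}(\exp_H tv)$ has $\log_H$ with $st$-coefficient $\phi(B(u))v+\tfrac12[u,v]_{\mathfrak h}$; applying $\mathcal B$ then produces $B\big(\phi(B(u))v+\tfrac12[u,v]_{\mathfrak h}\big)$ together with an extra term $Q(u,v)$, where $Q$ is the symmetric Hessian of $X\mapsto\log_G\mathcal B(\exp_H X)$ at $0$. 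The obstacle here is exactly this unwanted symmetric term $Q(u,v)$, which is chart-dependent and does not appear on the left; it is removed by antisymmetrising the resulting identity in $u\leftrightarrow v$, which cancels $Q(u,v)=Q(v,u)$, doubles the bracket, and once more yields \eqref{eq:relative_Lie_RB}. Either route reduces the theorem to standard Lie-theoretic bookkeeping once these points are in place.
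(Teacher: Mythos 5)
This theorem is stated in the paper as a quotation of \cite[Theorem 4.1]{JSZ}; the paper contains no proof of it at all, so there is no ``paper's own proof'' to compare against---your argument can only be judged on its own merits and against the cited source. On those terms it is correct. Your first route is exactly the natural one given the graph characterisation that the paper itself recalls in the introduction: the relative Rota--Baxter identity for $\mathcal B$ is equivalent to $Gr(\mathcal B)$ being a subgroup of $G\ltimes_\Phi H$ (closure under inverses follows from closure under products, since taking $h_1=h_2=1_H$ gives $\mathcal B(1_H)=1_G$ and then $h_2=\Phi_{\mathcal B(h_1)^{-1}}(h_1^{-1})$ produces the inverse), your embedding argument correctly upgrades this subgroup to an embedded Lie subgroup, and the bracket computation in $\mathfrak g\ltimes_\phi\mathfrak h$, namely $[(B(u),u),(B(v),v)]=\bigl([B(u),B(v)]_{\mathfrak g},\ \phi(B(u))v-\phi(B(v))u+[u,v]_{\mathfrak h}\bigr)$, lands in the graph of $B=\mathcal B_{*1_H}$ precisely when the weight-$1$ relation \eqref{eq:relative_Lie_RB} holds. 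Your second, direct-differentiation route is also sound, and you correctly isolate the only real pitfall there: the chart-dependent symmetric Hessian term $Q(u,v)$, which antisymmetrisation in $u\leftrightarrow v$ removes while doubling the commutator terms. The only hypotheses you use beyond the statement are smoothness of $\mathcal B$ and of the action $G\times H\to H$; both are implicit in the Lie-group setting of the theorem (the tangent map $\mathcal B_{*1_H}$ and the differentiated action $\phi$ would not otherwise exist), so this is not a gap, merely worth stating explicitly.
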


\bigskip


\section{Relative Rota-Baxter operators on Hopf algebras} \label{RRBO}

In the present section we define a (relative) Rota--Baxter operators on a Hopf algebra~$H$.
If $H$ is cocommutative, then M.~Goncharov \cite{Goncharov2020} defined such operators $B \colon H \to H$, which is coalgebra homomorphism and satisfies the identity
$$
B(x) B(y) = B\left( x_{(1)} \, B(x_{(2)}) \, y \, S(B(x_{(3)})) \right),~~x, y \in H,
$$
where
$$
\Delta(x) = x_{(1)} \otimes x_{(2)}, ~~\Delta^2(x) = (\id \otimes \Delta ) \Delta(x) = x_{(1)} \otimes x_{(2)} \otimes x_{(3)}.
$$
In particular, if $x$ is a group-like element, then $x = x_{(1)}  = x_{(2)}= x_{(3)}$, and $S(x) = x^{-1}$. In this case we have
$$
B(x) B(y) = B\left( x \, B(x) \, y \, B(x)^{-1} \right),
$$
that is the  definition of the Rota--Baxter operator on groups.

\subsection{Relative Rota-Baxter operators}

We introduce the next definition.

\begin{definition} \label{Def-RRBO}
Let $(H, *, \Delta_H, \varepsilon_H, S_H)$,  $(G, \cdot, \Delta_G, \varepsilon_G, S_G)$ be  Hopf algebras and the next conditions hold:

1) A map
$$
B \colon H(\Delta_H, \varepsilon_H) \to G(\Delta_G, \varepsilon_G)
$$
is a coalgebra homomorphism.

2) $\Phi \colon G \to \End(H)$ defines an action of $G$ on $H$, i.e. $H$ is a left $G$-module algebra. It means that the next equalities hold,
$$
\Phi_g \Phi_h (a)= \Phi_{gh}(a),~~\Phi_1 (a)= a,~~\Phi_g (a * b)= \Phi_{g_{(1)}} (a) * \Phi_{g_{(2)}} (b),~~\Phi_g (1)=  \varepsilon(g) 1.
$$

3) For any $a, b \in H$ holds
\begin{equation} \label{Cond-as}
\left( \Phi_{B(a_{(2)})} (b) \right)_{(1)} \otimes a_{(1)} * \left(  \Phi_{B(a_{(2)})} (b) \right)_{(2)} = \Phi_{B(a_{(1)})} (b_{(1)})  \otimes a_{(2)} *  \Phi_{B(a_{(3)})} (b_{(2)}).
\end{equation}

4) For any $a, b \in H$ holds
$$
B(a) B(b) = B\left(a_{(1)} * \Phi_{B(a_{(2)})} (b)\right).
$$
We shall call such  operator $B$ a {\it relative Rota-Baxter operator} (RRBO) on the Hopf algebra $H$ with respect to $\Phi$.
 The  quadruple $(H, G,\Phi, B)$ is called by a \emph{relative Rota--Baxter Hopf algebra}.
\end{definition}

If $B$ is a RRBO, then we can define a binary operation $\circ \colon H \otimes H \to H$ by the rule
$$
a \circ b =  a_{(1)} * \Phi_{B(a_{(2)})} (b).
$$
Hence
$$
B(a) B(b) = B(a \circ b).
$$

\begin{proposition}
A relative Rota-Baxter operator $B \colon H \to G$ sends the unit element $1 \in H$ to the unit element of $G$.
\end{proposition}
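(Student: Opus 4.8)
The plan is to evaluate the defining identity in item (4) of Definition~\ref{Def-RRBO} at $a = b = 1_H$ and to exploit the fact that the unit $1_H$ is a group-like element. Since $\Delta_H(1_H) = 1_H \otimes 1_H$, in Sweedler notation we have $(1_H)_{(1)} = (1_H)_{(2)} = 1_H$, so condition (4) reduces to
$$
B(1_H) B(1_H) = B\bigl( 1_H * \Phi_{B(1_H)}(1_H) \bigr).
$$

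First I would simplify the right-hand side. By the module-algebra axiom in item (2) we have $\Phi_g(1_H) = \varepsilon(g)\, 1_H$ for every $g \in G$, and because $B$ is a coalgebra homomorphism it preserves counits, so $\varepsilon_G(B(1_H)) = \varepsilon_H(1_H) = 1$. Hence $\Phi_{B(1_H)}(1_H) = \varepsilon_G(B(1_H))\,1_H = 1_H$, and, since $1_H$ is the unit for $*$, the displayed identity collapses to the idempotency relation
$$
B(1_H)^2 = B(1_H)
$$
in the algebra $G$.

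Next I would promote this idempotent to the unit by showing that $B(1_H)$ is invertible. Again using that $B$ is a coalgebra homomorphism and that $1_H$ is group-like, we get $\Delta_G(B(1_H)) = (B \otimes B)\Delta_H(1_H) = B(1_H) \otimes B(1_H)$ together with $\varepsilon_G(B(1_H)) = 1$; thus $B(1_H)$ is a group-like element of $G$. For any group-like $x$ the antipode axiom \eqref{1.1.b} specializes to $S_G(x)\, x = x\, S_G(x) = \varepsilon_G(x)\, 1_G = 1_G$, so $B(1_H)$ admits the two-sided inverse $S_G(B(1_H))$. Multiplying $B(1_H)^2 = B(1_H)$ on the left by this inverse yields $B(1_H) = 1_G$, which is the claim.

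The only subtle point is the passage from idempotency to $B(1_H) = 1_G$: in a general, possibly infinite-dimensional, Hopf algebra one cannot cancel an arbitrary idempotent, so the essential observation is that $B(1_H)$ is group-like and hence automatically invertible via the antipode. Everything else is a direct substitution using $\Delta(1_H) = 1_H \otimes 1_H$, the action axiom $\Phi_g(1_H) = \varepsilon(g)\,1_H$, and the preservation of counits by the coalgebra homomorphism $B$.
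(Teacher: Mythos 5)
Your proposal is correct and follows the same route as the paper: substitute $a=b=1$ into axiom (4) of Definition~\ref{Def-RRBO}, use $\Delta_H(1)=1\otimes 1$, the module-algebra axiom $\Phi_g(1)=\varepsilon(g)1$, and counit preservation by the coalgebra map $B$ to collapse the identity to $B(1)^2=B(1)$. The difference is in the last step: the paper simply asserts that this idempotency means $B(1)$ is the unit of $G$, whereas you correctly flag that an idempotent in an algebra need not be the unit, and you close the gap by observing that $B(1)$ is group-like (again because $B$ is a coalgebra homomorphism), hence invertible with two-sided inverse $S_G(B(1))$ by the antipode axiom \eqref{1.1.b}, so that cancellation is legitimate and $B(1)=1_G$. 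This extra argument is genuinely needed, and your write-up is the more complete of the two.
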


\begin{proof}
Let us put $a = b = 1$ in the equality
$$
B(a) B(b) = B\left(a_{(1)} * \Phi_{B(a_{(2)})} (b)\right),
$$
we get in $G$ that $B(1) B(1) = B(1)$. It means that $B(1)$ is a unit element in $G$.
\end{proof}

From this proposition and from the fact that $B$ is a coalgebra homomorphism follows that $B$ sends group-like elements to the group-like elements and primitive elements to primitive elements.

\begin{remark}
It is easy to see that the equality (\ref{Cond-as}) is equivalent to the equality
\begin{equation*}
\left( \Phi_{B(a)} (b) \right)_{(1)} \otimes  \left(  \Phi_{B(a)} (b) \right)_{(2)} = \Phi_{B(a_{(2)})} (b_{(1)})  \otimes S_H(a_{(1)}) * a_{(3)} *  \Phi_{B(a_{(4)})} (b_{(2)}).
\end{equation*}
\end{remark}

\begin{example}
\begin{enumerate}
\item Let $G = H$, if we define $\Phi$ by the formula
$$
 \Phi_a (b) = a_{(1)} \, b \, S(a_{(2)}),
$$
then
$$
a \circ b = a_{(1)} * B(a_{(2)}) \, b \,  S(B(a_{(3)})),
$$
and  $B \colon H \to H$  satisfies   the equality
$$
B(a) \, B(b) = B\left(a_{(1)} * B(a_{(2)}) \, b \,  S(B(a_{(3)}))\right).
$$
This operator was defined in \cite{Goncharov2020}.  We shall call this operator by the {\it group Rota--Baxter operator} (GRBO).

The condition on associativity of $\circ$ has the form
$$
B(a_{(2)}) \, b_{(1)} \, S \left( (B(a_{(5)}))_{(1)} \right) \otimes a_{(1)} \left( B(a_{(3)})  \, b_{(2)} \, S(B(a_{(4)})) \right)_{(2)} =
$$
$$
= B(a_{(1)})  \, b_{(1)}\,  S(B(a_{(2)})) \otimes a_{(3)} B(a_{(4)})  \, b_{(2)} \, S(B(a_{(5)})).
$$
If $H$ is a cocommutative algebra, then this condition holds.


\item Let a Hopf algebra $H$ have an exact factorization, i.e. there exist Hopf subalgebras $A$, $L$ in $H$ such that $H=AL$ and $A\cap L=\Bbbk 1$. We take $G= L^{op}$ and the map
$\Phi \colon G \to \End(H)$ defined  by the formula
$$
\Phi_{l}(h)=S_H(l_{(1)})h l_{(2)}, \quad h\in H, l\in L^{op},
$$
and the map $B\colon H\to L^{op}$,
$$
B(al)=\varepsilon(a)l,\quad a\in A,l\in L.
$$
Then $B$ is a relative Rota--Baxter operator on $H$ with respect to $\Phi$. This example generalises the example \cite[Proposition 2]{Goncharov2020} to the case of non-cocomutative Hopf algebras.
\end{enumerate}
\end{example}

\begin{proposition}
The algebra $(H, \circ)$ is an associative algebra.
\end{proposition}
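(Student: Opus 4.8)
The plan is to verify associativity of $\circ$ by reducing both $a\circ(b\circ c)$ and $(a\circ b)\circ c$ to the common normal form $a_{(1)}*\Phi_{B(a_{(2)})}(b_{(1)})*\Phi_{(\cdots)}(c)$ and then matching the remaining argument of $\Phi$ acting on $c$. Bilinearity of $\circ$ is immediate from the linearity of $\Delta$, of $B$, and of each $\Phi_g$, while the two-sided unit is $1$: indeed $1\circ a=\Phi_{B(1)}(a)=\Phi_1(a)=a$ and $a\circ 1=a_{(1)}*\Phi_{B(a_{(2)})}(1)=a_{(1)}\,\varepsilon_G(B(a_{(2)}))=a$, using $B(1)=1$, $\Phi_1=\id$, and $\varepsilon_G\circ B=\varepsilon_H$ (from $B$ being a coalgebra homomorphism) together with the counit axiom \eqref{1.1.c}. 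So the entire content of the statement is associativity.

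First I would expand the right-associated product. Since $b\circ c=b_{(1)}*\Phi_{B(b_{(2)})}(c)$, writing $a\circ(b\circ c)=a_{(1)}*\Phi_{B(a_{(2)})}(b\circ c)$ and applying the module-algebra axiom $\Phi_g(x*y)=\Phi_{g_{(1)}}(x)*\Phi_{g_{(2)}}(y)$ with $g=B(a_{(2)})$, together with $\Delta_G(B(a_{(2)}))=B(a_{(2)})\otimes B(a_{(3)})$ (as $B$ is a coalgebra homomorphism), and then collapsing $\Phi_{B(a_{(3)})}\Phi_{B(b_{(2)})}=\Phi_{B(a_{(3)})B(b_{(2)})}$ by the action property, yields
\[
a\circ(b\circ c)=a_{(1)}*\Phi_{B(a_{(2)})}(b_{(1)})*\Phi_{B(a_{(3)})B(b_{(2)})}(c).
\]

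For the left-associated product I first need $\Delta(a\circ b)$. Because $\Delta$ is an algebra homomorphism for $*$, a short computation gives $(a\circ b)_{(1)}=a_{(1)}*(\Phi_{B(a_{(3)})}(b))_{(1)}$ and $(a\circ b)_{(2)}=a_{(2)}*(\Phi_{B(a_{(3)})}(b))_{(2)}$. Substituting into $(a\circ b)\circ c=(a\circ b)_{(1)}*\Phi_{B((a\circ b)_{(2)})}(c)$ produces an expression in which the factor $(\Phi_{B(a_{(3)})}(b))_{(1)}$ and the $B$-argument $a_{(2)}*(\Phi_{B(a_{(3)})}(b))_{(2)}$ are exactly the two tensor legs on the left-hand side of \eqref{Cond-as}, with $a_{(2)}$ in the role of the variable and $a_{(1)}$ a spectator. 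This is the heart of the proof: I read \eqref{Cond-as} as an equality of linear maps $H\to H\otimes H$ in the variable $a$, apply $\id_H\otimes(-)$ to $\Delta(a)$ to tensor it with the spectator leg $a_{(1)}$ in $H^{\otimes 3}$, and thereby rewrite those two legs as $\Phi_{B(a_{(2)})}(b_{(1)})$ and $a_{(3)}*\Phi_{B(a_{(4)})}(b_{(2)})$. Contracting with the multilinear map $x\otimes u\otimes v\mapsto x*u*\Phi_{B(v)}(c)$ then gives
\[
(a\circ b)\circ c=a_{(1)}*\Phi_{B(a_{(2)})}(b_{(1)})*\Phi_{B(a_{(3)}*\Phi_{B(a_{(4)})}(b_{(2)}))}(c).
\]

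Finally the two forms are reconciled by the defining identity 4) of Definition \ref{Def-RRBO}: applied to $a_{(3)}$ and $b_{(2)}$ it reads $B(a_{(3)})B(b_{(2)})=B\bigl(a_{(3)}*\Phi_{B(a_{(4)})}(b_{(2)})\bigr)$, so the two arguments of $\Phi$ acting on $c$ coincide and $(a\circ b)\circ c=a\circ(b\circ c)$. I expect the main obstacle to be, first, the Sweedler-index bookkeeping in passing from $\Delta(a\circ b)$ to the four-fold comultiplication of $a$, and, above all, the careful justification of the middle step — that \eqref{Cond-as}, an identity for a single element, may legitimately be inserted into the already-comultiplied expression for $(a\circ b)\circ c$; phrasing \eqref{Cond-as} as an equality of linear maps and then evaluating on $\Delta(a)$ with $a_{(1)}$ as spectator is the clean way to make this rigorous. (One could instead use the equivalent form in the Remark, which computes $\Delta(\Phi_{B(a)}(b))$ directly, but the map-level formulation seems least error-prone.)
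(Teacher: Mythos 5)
Your proof is correct and takes essentially the same route as the paper's: both expand the two associations using that $\Delta_H$ is an algebra map, the module-algebra axiom, the action property, and the coalgebra-homomorphism property of $B$, and both close the gap by applying condition \eqref{Cond-as} at $a_{(2)}$ (with $a_{(1)}$ as spectator) together with axiom 4) of the definition of a relative Rota--Baxter operator, the only difference being that the paper invokes axiom 4) while expanding $a\circ(b\circ c)$ whereas you invoke it at the very end. Your map-level justification of the substitution $a=a_{(2)}$ in \eqref{Cond-as} and your check of the unit are refinements the paper leaves implicit.
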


\begin{proof}
We have to check the equality
$$
(a \circ b) \circ c = a \circ (b \circ c),~~a, b, c \in H.
$$
The left hand side has the form
$$
(a \circ b) \circ c = (a \circ b)_{(1)} *  \Phi_{B((a \circ b)_{(2)})} (c) = \left( a_{(1)} *  \Phi_{B(a_{(2)})} (b) \right)_{(1)}
 *  \Phi_{B((  a_{(1)} *   \Phi_{B(a_{(2)})}(b) )_{(2)})} (c) =
 $$
 $$
 =  a_{(1)} *  \left( \Phi_{B(a_{(3)})} (b) \right)_{(1)}  *  \Phi_{B(  a_{(2)} * (\Phi_{B(a_{(3)})}(b) )_{(2)})} (c),
$$
where we used the equality
$$
 (a * b)_{(1)} \otimes  (a * b)_{(2)} =  \left( a_{(1)} * b_{(1)}  \right) \otimes \left( a_{(2)} * b_{(2)} \right).
$$

The right hand side has the form
$$
a \circ (b \circ c) = a_{(1)} *  \Phi_{B(a_{(2)})} \left( b_{(1)} *  \Phi_{B(b_{(2)})} (c) \right) =
a_{(1)} *  \Phi_{(B(a_{(2)}))_{(1)}} (b_{(1)}) * \Phi_{(B(a_{(2)}))_{(2)}} \left( \Phi_{B(b_{(2)})} (c) \right) =
 $$
 $$
= a_{(1)} *  \Phi_{(B(a_{(2)}))_{(1)}} (b_{(1)}) * \Phi_{(B(a_{(2)}))_{(2)} B(b_{(2)})} (c) =
a_{(1)} *  \Phi_{B(a_{(2)})} (b_{(1)}) * \Phi_{B(a_{(3)} * \Phi_{B(a_{(4)})}(b_{(2)}))} (c).
$$
Comparing the left hand side and the right hand side, we get
$$
 \left( \Phi_{B(a_{(3)})} (b) \right)_{(1)}  *  \Phi_{B(  a_{(2)} * (\Phi_{B(a_{(3)})}(b) )_{(2)})} (c) =   \Phi_{B(a_{(2)})} (b_{(1)}) * \Phi_{B(a_{(3)} * \Phi_{B(a_{(4)})}(b_{(2)}))} (c).
$$
This equality follows from (\ref{Cond-as}) for $a = a_{(2)}$ and by  applying the bilinear map $x \otimes y \mapsto x * \Phi_{B(y)}$.
\end{proof}

\begin{theorem}
The algebra $(H, \circ, \Delta_H, \varepsilon_H, S_B)$, where
$$
S_B (a) = \Phi_{S_G(B(a_{(1)}))} (S_H(a_{(2)})),~~a \in H,
$$
 is a Hopf algebra.
\end{theorem}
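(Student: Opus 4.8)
The comultiplication $\Delta_H$ and the counit $\varepsilon_H$ are the same as for the Hopf algebra $(H,*)$, so the coassociativity axiom \eqref{1.1.a} and the counit axiom \eqref{1.1.c} hold automatically; what remains is to check that $(H,\circ,\Delta_H,\varepsilon_H)$ is a bialgebra with unit $1_H$ and that $S_B$ is an antipode for it. The plan is to verify, in order: that $1_H$ is a two-sided unit for $\circ$; that $\varepsilon_H$ and $\Delta_H$ are algebra homomorphisms with respect to $\circ$; and finally the two antipode identities $a_{(1)}\circ S_B(a_{(2)})=\varepsilon_H(a)1_H$ and $S_B(a_{(1)})\circ a_{(2)}=\varepsilon_H(a)1_H$.

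For the unit, $B(1)=1_G$ and $\Phi_{1_G}=\id$ give $1\circ b=1*\Phi_{B(1)}(b)=b$, while $a\circ 1=a_{(1)}*\Phi_{B(a_{(2)})}(1)=\varepsilon_G(B(a_{(2)}))\,a_{(1)}=\varepsilon_H(a_{(2)})\,a_{(1)}=a$, using $\Phi_g(1)=\varepsilon(g)1$ and that $B$ is a coalgebra map, so $\varepsilon_G\circ B=\varepsilon_H$. Multiplicativity of $\varepsilon_H$ reduces, by the same two facts, to $\varepsilon_H\circ\Phi_g=\varepsilon_G(g)\varepsilon_H$, which expresses that the action $\Phi$ is counit-preserving (part of the $G$-module algebra compatibility). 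The multiplicativity of $\Delta_H$ is the first substantial step. Using that $\Delta_H$ is multiplicative for $*$ one finds $\Delta_H(a\circ b)=(a_{(1)}\otimes a_{(2)})*\Delta_H\bigl(\Phi_{B(a_{(3)})}(b)\bigr)$, and the coproduct of $\Phi_{B(a_{(3)})}(b)$ is supplied exactly by the equivalent form of \eqref{Cond-as} in the Remark, namely $\Delta_H(\Phi_{B(x)}(b))=\Phi_{B(x_{(2)})}(b_{(1)})\otimes S_H(x_{(1)})*x_{(3)}*\Phi_{B(x_{(4)})}(b_{(2)})$. Substituting this and collapsing the consecutive pair $a_{(2)}*S_H(a_{(3)})$ by the antipode axiom \eqref{1.1.b} for $S_H$ telescopes the expression to $(a_{(1)}\circ b_{(1)})\otimes(a_{(2)}\circ b_{(2)})$, as required for the bialgebra structure.

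The right antipode identity is short. Writing $S_B(a_{(2)})=\Phi_{S_G(B(a_{(3)}))}(S_H(a_{(4)}))$ and using $\Phi_g\Phi_h=\Phi_{gh}$, one gets $a_{(1)}\circ S_B(a_{(2)})=a_{(1)}*\Phi_{B(a_{(2)})S_G(B(a_{(3)}))}(S_H(a_{(4)}))$. Since $B$ is a coalgebra homomorphism and $S_G$ is the antipode of $G$, the factor $B(a_{(2)})S_G(B(a_{(3)}))$ collapses to $\varepsilon_H(\cdot)1_G$ by the antipode axiom \eqref{1.1.b} for $S_G$; after this collapse and $\Phi_{1_G}=\id$ we are left with $a_{(1)}*S_H(a_{(2)})=\varepsilon_H(a)1_H$, again by \eqref{1.1.b} for $S_H$.

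The main obstacle is the left antipode identity $S_B(a_{(1)})\circ a_{(2)}=\varepsilon_H(a)1_H$. Here $S_B$ occupies the first slot, so evaluating $\circ$ requires the coproduct of $S_B(a)=\Phi_{S_G(B(a_{(1)}))}(S_H(a_{(2)}))$, which is not directly delivered by \eqref{Cond-as}, whose $G$-argument is $B(\cdot)$ rather than $S_G(B(\cdot))$. I expect this to be the hardest point and would handle it in one of two ways. The first is to establish a coproduct formula for $S_B$ from \eqref{Cond-as} (or its Remark form) together with the anti-comultiplicativity of $S_H$ and the relations $\Phi_g\Phi_h=\Phi_{gh}$, and then run a telescoping computation parallel to the ones above. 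The second, cleaner, route is to pass to the convolution monoid $(\End_\Bbbk H,\star_\circ,\varepsilon_H(\cdot)1_H)$ with $(f\star_\circ g)(a)=f(a_{(1)})\circ g(a_{(2)})$: the previous paragraph shows $\id\star_\circ S_B=\varepsilon_H(\cdot)1_H$, so it suffices to produce any left inverse of $\id$, after which associativity of convolution forces it to coincide with $S_B$, making $S_B$ a two-sided antipode and $(H,\circ,\Delta_H,\varepsilon_H,S_B)$ a Hopf algebra. As a consistency check, in the cocommutative setting of Goncharov the condition \eqref{Cond-as} is automatic, each $\Phi_g$ is then a coalgebra map, and both antipode identities reduce to the familiar computation for the group Rota--Baxter operator.
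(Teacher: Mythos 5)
Your verification of the unit, of the multiplicativity of $\Delta_H$ (via the Remark's equivalent form of \eqref{Cond-as}, which telescopes through $a_{(2)}*S_H(a_{(3)})=\varepsilon_H(a_{(2)})1$), and of the right antipode identity $a_{(1)}\circ S_B(a_{(2)})=\varepsilon_H(a)1$ is correct and agrees with the paper's computations. But the left antipode identity $S_B(a_{(1)})\circ a_{(2)}=\varepsilon_H(a)1$, which you yourself single out as the main obstacle, is never actually proved, and this is a genuine gap. Your Route 2 is circular as a strategy: in the convolution monoid, knowing that $\id$ has the right inverse $S_B$ gives nothing until a left inverse of $\id$ is exhibited (a right-invertible element of a monoid need not be left-invertible), and exhibiting such a left inverse is precisely the remaining problem restated. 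Route 1 (deriving a coproduct formula for $S_B$) is only named, not carried out. A smaller inaccuracy: you justify $\varepsilon_H\circ\Phi_g=\varepsilon_G(g)\varepsilon_H$ as ``part of the $G$-module algebra compatibility,'' but that counit condition is a module-coalgebra axiom and is not among the conditions the paper imposes on $\Phi$, so it cannot simply be cited.

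The paper closes the left identity without any coproduct formula for $S_B$ and without convolution, using three ingredients you already have in hand: associativity of $\circ$ (the preceding proposition), the right antipode identity, and the ability to untwist the action through $S_G$. Concretely, from $a=\varepsilon_H(a_{(1)})a_{(2)}$ and the right identity one gets $a=(a_{(1)}\circ S_B(a_{(2)}))\circ a_{(3)}=a_{(1)}\circ\bigl(S_B(a_{(2)})\circ a_{(3)}\bigr)=a_{(1)}*\Phi_{B(a_{(2)})}\bigl(S_B(a_{(3)})\circ a_{(4)}\bigr)$; multiplying on the left by $S_H(a_{(1)})$ in $(H,*)$ and using \eqref{1.1.b} cancels the first factor and yields
$$
\Phi_{B(a_{(1)})}\bigl(S_B(a_{(2)})\circ a_{(3)}\bigr)=\varepsilon_H(a)1 .
$$
Finally, since $B$ is a coalgebra map, $S_G(B(a_{(1)}))\,B(a_{(2)})=\varepsilon_H(a)1_G$, so
$$
S_B(a_{(1)})\circ a_{(2)}=\Phi_{S_G(B(a_{(1)}))}\Phi_{B(a_{(2)})}\bigl(S_B(a_{(3)})\circ a_{(4)}\bigr)=\Phi_{S_G(B(a_{(1)}))}\bigl(\varepsilon_H(a_{(2)})1\bigr)=\varepsilon_H(a)1 ,
$$
using $\Phi_g\Phi_h=\Phi_{gh}$, $\Phi_g(1)=\varepsilon(g)1$, and $\varepsilon_G\circ S_G=\varepsilon_G$, $\varepsilon_G\circ B=\varepsilon_H$. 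This is the idea your proposal is missing: the left identity is deduced from the right one by exploiting associativity of $\circ$ together with cancellation in $(H,*)$ and in $G$, rather than by constructing a new inverse or a coproduct formula for $S_B$.
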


\begin{proof}
For simplicity we will write $\Delta = \Delta_H$, $\varepsilon = \varepsilon_H$.

Let us check that the comultiplication is an algebra homomorphism. We have
$$
\Delta (a \circ b) = \Delta\left( a_{(1)} * \Phi_{B(a_{(2)})}(b)\right) =  a_{(1)}  * \left( \Phi_{B(a_{(3)})}(b)\right)_{(1)} \otimes
 a_{(2)}  * \left( \Phi_{B(a_{(3)})}(b)\right)_{(2)}.
$$
On the other side,
$$
\Delta (a) \circ  \Delta (b) = a_{(1)} \circ b_{(1)} \otimes   a_{(2)} \circ b_{(2)} =  a_{(1)} * \Phi_{B(a_{(2)})}(b_{(1)}) \otimes  a_{(3)} * \Phi_{B(a_{(4)})}(b_{(2)}).
$$
Hence, we equality $\Delta (a \circ b) = \Delta (a) \circ  \Delta (b)$ holds if and only if,
$$\left( \Phi_{B(a_{(3)})}(b)\right)_{(1)} \otimes
 a_{(2)}  * \left( \Phi_{B(a_{(3)})}(b)\right)_{(2)} = \Phi_{B(a_{(2)})}(b_{(1)}) \otimes  a_{(3)} * \Phi_{B(a_{(4)})}(b_{(2)}).
$$
 This equality follows from (\ref{Cond-as}) for $a = a_{(2)}$.

Now we have to check
$$
a_{(1)} \circ S_B(a_{(2)}) = \varepsilon(a) 1.
$$
By the definition of $\circ$ we have
\begin{multline*}
   a_{(1)} \circ S_B(a_{(2)})=a_{(1)}*\Phi_{B(a_{(2)})}\Phi_{S_G(B(a_{(3)})}(S_H(a_{(4)}))=\\
   a_{(1)}*\Phi_{B(a_{(2)})_{(1)}S_G(B(a_{(2)})_{(2)})}(S_H(a_{(3)}))=a_{(1)}*\Phi_{\varepsilon(a_{(2)})1}(S_H(a_{(3)}))=\\ a_{(1)}*S_H(a_{(2)})=\varepsilon(a) 1.
\end{multline*}

On the other hand, for any $a\in H$
$$
    a=\varepsilon(a_{(1)})a_{(2)}=a_{(1)}\circ S_B(a_{(2)})\circ a_{(3)}=a_{(1)}*\Phi_{B(a_{(2)})}(S_B(a_{(3)})\circ a_{(4)}).
$$
Hence,
$$
\Phi_{B(a_{(1)})}(S_B(a_{(2)})\circ a_{(3)})=S_H(a_{(1)})a_{(2)}=\varepsilon(a)1,
$$
and
\begin{multline*}
    S_B(a_{(1)})\circ a_{(2)}=\Phi_{S_G(B(a_{(1)}))}\Phi_{B(a_{(2)})}(S_B(a_{(3)})\circ a_{(4)})=\\
    \Phi_{S_G(B(a_{(1)}))}(\varepsilon(a_{(2)})1)=\varepsilon(a_{(1)})\varepsilon(a_{(2)})1=\varepsilon(a)1.
\end{multline*}
\end{proof}

\subsection{Hopf Rota--Baxter operators}

As particular case of relative Rota--Baxter operator on Hopf algebras we can define a Rota--Baxter operator on Hopf algebra. In the general construction we take
$G = H^{op}$

\begin{definition} \label{Def-RRBO}
Let $(H, *, \Delta_H, \varepsilon_H, S_H)$ be a Hopf algebra and the next conditions hold:

1) A map
$$
B \colon H \to H^{op}
$$
is a coalgebra homomorphism.

2) $\Phi \colon  H^{op} \to \End(H)$ is  defined by the rule  $\Phi_a(b) = S(a_{(1)}) \,  b (a_{(2)})$.

3) For any $a, b \in H$ holds
\begin{equation}
S\left( B(a_{(2)}) \right) \, b \, B(a_{(3)}) \otimes S\left( B(a_{(1)}) \right) = S\left(B(a_{(2)}) \right) \, b  \, B(a_{(3)}) \otimes S(a_{(1)}) \, a_{(4)} \, S\left( B(a_{(5)})\right).
\end{equation}

4) For any $a, b \in H$ holds
$$
B(a) B(b) = B\left(a_{(1)} *   S(B(a_{(2)})) \,  b B(a_{(3)})\right).
$$

We shall call such  operator $B$ a {\it Hopf Rota-Baxter operator} (HRBO) on $H$ with respect to $\Phi$, and the triple $(H, \Phi, B)$  a {\it Rota-Baxter Hopf algebra}.
\end{definition}

If we define a binary operation $\circ \colon H \otimes H \to H$ by the rule
$$
a \circ b = a_{(1)} *  S(B(a_{(2)})) \,  b \, B(a_{(3)}),
$$
then
$$
B(a) B(b) = B(a \circ b).
$$





\subsection{Hopf braces} Hopf brace was defined in the paper \cite{AGV}. The axiom which connects to associative operations has the form
\begin{equation} \label{Hop-br}
a \circ (b c) = (a_{(1)} \circ b) \, S(a_{(2)}) \, (a_{(3)} \circ c).
\end{equation}
If $a, b, c$ are group-like elements, then this axiom has the form
$$
a \circ (b c) = (a \circ b) \, a^{-1} \, (a \circ c).
$$
This is the axiom of skew brace.

In \cite{ZLMZ} was used other approach. Let $(H, \cdot, 1)$ be an algebra.  A Hopf brace structure over $H$ consists of the
following data:

1) a Hopf algebra structure $(H, \cdot, 1, \Delta, \varepsilon, S)$,

2) a Hopf algebra structure $(H, \cdot, 1, \Delta', \epsilon, T)$,

3) satisfying the following compatibility:
$$
a_{(1')} \otimes a_{(2'1)} \otimes a_{(2'2)}  = a_{(11')} \, S(a_{(2)}) \, a_{(31')} \otimes a_{(12')}  \otimes a_{(32')}
$$
for any $a \in  H$, where $\Delta(a)$ is denoted by $a_{(1)} \otimes a_{(2)}$ and $\Delta'(a)$ denoted by $a_{(1')} \otimes a_{(2')}$.

\begin{remark}
\begin{enumerate}
\item The condition 3) can be write in the form
$$
(\id  \otimes \Delta) \Delta'(a) = ( m \otimes \id \otimes \id) (m  \otimes \id \otimes \id \otimes \id) ( \id \otimes S \otimes \id \otimes \id \otimes \id)  P_{34} P_{23} (\Delta' \otimes \id  \otimes \Delta') ( \id \otimes \Delta) \Delta(a),
$$
where $m$ denotes the multiplication $\cdot$, $P_{ij}$ is the permutation of $i$-th and $j$-th components.

\item The condition 3) is dual to the condition (\ref{Hop-br}) and there exists a standard procedure to construct 3) from  (\ref{Hop-br}).

\item The definition   Hopf brace which formulated above is different from the definition in \cite{AGV}. May be, it is better to say on co-Hopf brace or Hopf cobrace.
\end{enumerate}
\end{remark}

 On a Hopf algebra $(H, *, \Delta_H, \varepsilon_H, S_H)$ we introduced the  Rota--Baxter operator and defined an associative  operation $\circ$.

\begin{proposition}
The new operation $\circ$ and the old antipode  $S_H$ define a Hopf brace  on~$H$.
\end{proposition}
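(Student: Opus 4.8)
The plan is to recognise the claimed structure as a Hopf brace in the sense of \cite{AGV}, namely axiom \eqref{Hop-br}, which requires two Hopf algebra structures sharing one and the same comultiplication and counit, linked by a compatibility relation. Here the two structures are the original Hopf algebra $(H,*,\Delta_H,\varepsilon_H,S_H)$ and the Hopf algebra $(H,\circ,\Delta_H,\varepsilon_H,S_B)$ furnished by the preceding theorem; crucially, both use the same $\Delta_H$ and $\varepsilon_H$. The first datum is given, and the second is exactly the content of that theorem, so the only remaining obligation is to verify the brace compatibility
$$
a\circ(b*c)=(a_{(1)}\circ b)*S_H(a_{(2)})*(a_{(3)}\circ c),\qquad a,b,c\in H,
$$
with the old antipode $S_H$, as the statement asserts.

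First I would expand the left-hand side. Using $a\circ b=a_{(1)}*\Phi_{B(a_{(2)})}(b)$, the left $G$-module algebra property $\Phi_g(b*c)=\Phi_{g_{(1)}}(b)*\Phi_{g_{(2)}}(c)$, and the fact that $B$ is a coalgebra homomorphism (so $\Delta_G\,B=(B\otimes B)\,\Delta_H$, allowing one to replace $B(a_{(2)})_{(1)}\otimes B(a_{(2)})_{(2)}$ by $B(a_{(2)})\otimes B(a_{(3)})$), one obtains
$$
a\circ(b*c)=a_{(1)}*\Phi_{B(a_{(2)})}(b)*\Phi_{B(a_{(3)})}(c).
$$

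Next I would expand the right-hand side. Writing each factor $a_{(i)}\circ(-)$ through the definition of $\circ$ and passing to a sufficiently fine Sweedler decomposition of $a$ gives
$$
a_{(1)}*\Phi_{B(a_{(2)})}(b)*S_H(a_{(3)})*a_{(4)}*\Phi_{B(a_{(5)})}(c).
$$
The decisive simplification concerns the adjacent pair $S_H(a_{(3)})*a_{(4)}$: by coassociativity these two legs are the comultiplication of a single tensor factor, so the antipode axiom \eqref{1.1.b} collapses them to $\varepsilon(a_{(3)})\,1$, after which the counit axiom \eqref{1.1.c} removes the surviving $\varepsilon$ and reduces the five-fold decomposition back to a three-fold one. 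The result is precisely the expression computed for the left-hand side, which establishes \eqref{Hop-br}.

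The hard part will be purely the Sweedler bookkeeping in the last step: one must first pass to the coarser four-fold decomposition $a_{[1]}\otimes a_{[2]}\otimes a_{[3]}\otimes a_{[4]}$ and split only its third leg, so that applying $m(S_H\otimes\id_H)\Delta_H$ to that leg yields $\varepsilon(a_{[3]})\,1$ while the legs feeding the two copies of $\Phi$ remain intact; getting the index ranges right is where a careless computation would go wrong. It is worth noting, finally, that—unlike the associativity of $\circ$ and the bialgebra compatibility of $\Delta_H$ with $\circ$—the brace axiom \eqref{Hop-br} itself does not call on condition \eqref{Cond-as}; that hypothesis enters only indirectly, through the cited theorem guaranteeing that $(H,\circ,\Delta_H,\varepsilon_H,S_B)$ is a Hopf algebra.
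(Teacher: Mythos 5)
Your proof is correct and takes essentially the same approach as the paper: both arguments reduce the brace axiom to the identity $a\circ(b*c)=a_{(1)}*\Phi_{B(a_{(2)})}(b)*\Phi_{B(a_{(3)})}(c)$ via the module-algebra property of $\Phi$ and the fact that $B$ is a coalgebra homomorphism, and then match this against $(a_{(1)}\circ b)*S_H(a_{(2)})*(a_{(3)}\circ c)$ using the antipode and counit axioms --- the paper inserts $S_H(a_{(3)})*a_{(4)}=\varepsilon(a_{(3)})1$ into the left-hand side, whereas you collapse that pair out of the right-hand side, which is the same computation read in the opposite direction. Your explicit observation that the second Hopf algebra structure $(H,\circ,\Delta_H,\varepsilon_H,S_B)$ is supplied by the preceding theorem is left implicit in the paper but does not change the substance.
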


\begin{proof}
We define the operation
$$
a \circ b = a_{(1)} * \Phi_{B(a_{(2)})} (b),
$$
where $*$ is the multiplication on $H$. We have to check (\ref{Hop-br}). We have
$$
a \circ (b * c)  = a_{(1)} * \Phi_{B(a_{(2)})} (b * c) = a_{(1)} * \Phi_{B(a_{(2)})} (b)  * \Phi_{B(a_{(3)})} (c) =
$$
$$
= a_{(1)} * \Phi_{B(a_{(2)})} (b) * S(a_{(3)}) * a_{(4)} * \Phi_{B(a_{(5)})} (c) = (a_{(1)} \circ b) * S(a_{(2)}) * (a_{(3)} \circ c), ~~a, b, c \in H.
$$
Here we used that $ \Phi_{B(a_{(2)})}$ is an automorphism and  $B$ is homomorphism of coalgebras.
\end{proof}

\medskip
The next questions are of interest.

\begin{question}

1) In \cite{BNY} were introduced and studied  $\lambda$-braces. Is it possible to define relative  $\lambda$-braces?

2) Find Hopf RBO on $U_q(sl_2)$.
\end{question}

\bigskip


\section{Rota--Baxter operators of weight $\lambda$ on groups} \label{weight}

In this section we are studying the next questions.
 Let $\lambda$ be an integer number. Is it possible to define on a group $G$ a Rota-Baxter operator of weight  $\lambda$? For $\lambda=0$ this question was formulated in
\cite[p. 4]{GGLZ}.

On arbitrary group it is possible to define RBO of weight -1 and 1 (see Section \ref{Prel}). There is bijection between these operators. Indeed (see~\cite[Remark 2.12]{Guo2020}), if $B$ is an RBO on a group of weight 1, put $C(a) = B(a^{-1})$ for any $a \in G$, then the definition of $B$ we can rewrite in the form
$$
C(a^{-1}) \, C(b^{-1}) = C\left((a \,  C(a^{-1}) \, b \, C(a^{-1})^{-1} )^{-1} \right).
$$
It is equivalent to the identity
$$
C(a^{-1}) \, C(b^{-1}) = C\left( C(a^{-1}) \, b^{-1} \, C(a^{-1})^{-1} a^{-1} \right),
$$
or
$$
C(a) \, C(b) = C\left( C(a) \, b \, C(a)^{-1} a \right).
$$

To formulate further  statements, recall a definition of skew brace (see, for example \cite{GV}).
An algebraic system $(G; \cdot, \circ)$ with two binary algebraic operations is called a  \emph{left skew brace}, if $(G; \cdot)$ and $(G; \circ)$ are groups, which are called additive and multiplicative, correspondingly, and
\begin{equation}
a \circ (b \cdot  c) =  (a \circ b) \cdot a^{-1} \cdot (a \circ c)
 \end{equation}
for all $a, b, c \in G$, where $a^{-1}$ denotes the additive inverse of $a$.

    Let $(G,\cdot)$ be a group. Let $(G,\ast)$ be another group operation which is compatible with conjugation $Ad$:
\[
    g\cdot(h_1\ast h_2)\cdot g^{-1}=(gh_1g^{-1})\ast(gh_2g^{-1}),\quad g,h_1,h_2\in G,
\]
    and such that the units of the group operations $\cdot$ and $\ast$ coincide $1^\cdot=1^\ast$.

    We will use the next evident observation

\begin{proposition}
Let $(G, \cdot)$ be a group and $f \colon G \to G$ a bijection, then the operation
$$
a * b = f^{-1} (f(a) f(b)),~~a, b \in G,
$$
is a group operation on $G$.
\end{proposition}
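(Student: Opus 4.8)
The plan is to transport the group structure of $(G,\cdot)$ along the bijection $f$; the single fact that drives everything is that, by the very definition of $*$, applying $f$ to a product gives $f(a * b) = f(a)\,f(b)$ for all $a,b \in G$. In other words $f$ is multiplicative from $(G,*)$ to $(G,\cdot)$, and being a bijection it will be an isomorphism once $*$ is known to be a group operation. So each of the three group axioms for $*$ reduces to the corresponding axiom for $\cdot$ after applying $f$ and then cancelling $f$ by injectivity.

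First I would check associativity. Using $f(a*b)=f(a)f(b)$ twice,
$$
f\bigl((a*b)*c\bigr) = f(a*b)\,f(c) = f(a)\,f(b)\,f(c) = f(a)\,f(b*c) = f\bigl(a*(b*c)\bigr),
$$
and since $f$ is injective this yields $(a*b)*c = a*(b*c)$. Next I would exhibit the unit and inverses. Let $1$ denote the identity of $(G,\cdot)$ and set $e = f^{-1}(1)$. Then for any $a$ we have $a * e = f^{-1}(f(a)\cdot 1) = f^{-1}(f(a)) = a$, and symmetrically $e * a = a$, so $e$ is a two-sided identity for $*$. For inverses, given $a$ put $a' = f^{-1}\bigl(f(a)^{-1}\bigr)$; then $a * a' = f^{-1}\bigl(f(a)\,f(a)^{-1}\bigr) = f^{-1}(1) = e$, and likewise $a' * a = e$. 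Hence $(G,*)$ is a group.

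There is no genuine obstacle here — the paper itself calls the observation ``evident''. The only point requiring care is to apply $f$ and $f^{-1}$ in the right places and to invoke injectivity of $f$ when cancelling it after each computation; no structural difficulty arises. I would also remark that the argument proves slightly more than stated: the map $f\colon (G,*)\to (G,\cdot)$ is a group isomorphism, which is the conceptual reason the transported operation is automatically a group operation.
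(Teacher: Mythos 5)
Your proof is correct and complete: the transport-of-structure argument (associativity via injectivity of $f$, identity $e=f^{-1}(1)$, inverses $a'=f^{-1}(f(a)^{-1})$) is exactly the intended reasoning, and your closing remark that $f\colon(G,*)\to(G,\cdot)$ is then a group isomorphism is the right conceptual summary. The paper itself gives no proof, labelling the proposition an ``evident observation,'' so your write-up supplies precisely the argument the authors left implicit.
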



\medskip

Let us write down the relative Rota--Baxter relation for the case $H=(G, \ast)$, $\phi=Ad$
\begin{equation}\label{eq:ast_group_RB}
B(g_1)B(g_2)=B(g_1\ast B(g_1)g_2B(g_1)^{-1}),\quad g_1,g_2\in G.
\end{equation}

Consider the binary operation $g_1\circ g_2=g_1\ast B(g_1)g_2B(g_1)^{-1}$.

We have the following statement.

\begin{proposition}
   1) $(G, \circ)$ is a group;

   2) $(G, \ast,\circ)$ is a skew left brace;

   3) if $(G, \cdot, \ast)$ is a skew left brace then $(G, \cdot,\circ)$ is a skew left brace.
\end{proposition}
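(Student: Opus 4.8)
The plan is to read part 1) straight off the relative Rota–Baxter group machinery recalled in the introduction, and then to obtain 2) and 3) by two short computations that rest on the automorphism property of conjugation. First I would check that $Ad$ genuinely defines an action of $(G,\cdot)$ on the group $H=(G,\ast)$. The compatibility hypothesis $g\cdot(h_1\ast h_2)\cdot g^{-1}=(gh_1g^{-1})\ast(gh_2g^{-1})$ says precisely that each conjugation $Ad_g\colon x\mapsto gxg^{-1}$ is an automorphism of $(G,\ast)$, while $Ad_{g\cdot h}=Ad_g\circ Ad_h$ and $Ad_1=\id$ hold automatically; together with $1^\cdot=1^\ast$ this yields a homomorphism $Ad\colon(G,\cdot)\to\Aut(G,\ast)$. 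The relation \eqref{eq:ast_group_RB} then states exactly that $B$ is a relative Rota–Baxter operator on $H=(G,\ast)$ with respect to $\Psi=Ad$, and $g_1\circ g_2=g_1\ast B(g_1)g_2B(g_1)^{-1}=g_1\ast Ad_{B(g_1)}(g_2)$ is the associated operation $\circ_B$. Hence, by the result of \cite{JSZ} recalled in the introduction, $Gr(B)$ is a subgroup of $H\rtimes_{Ad}G$ and $(G,\circ)\cong Gr(B)$ is a group with $B\colon(G,\circ)\to(G,\cdot)$ a homomorphism; this is part 1), and I would quote it rather than reprove associativity and invertibility by hand.

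For 2) I would verify the skew left brace identity with $\ast$ as the additive and $\circ$ as the multiplicative operation, namely $a\circ(b\ast c)=(a\circ b)\ast a^{-\ast}\ast(a\circ c)$, where $a^{-\ast}$ denotes the $\ast$-inverse. Expanding the left side and using that $Ad_{B(a)}$ is a $\ast$-automorphism (established above) gives $a\ast Ad_{B(a)}(b)\ast Ad_{B(a)}(c)$; expanding the right side and cancelling $a^{-\ast}\ast a=1^\ast$ in the middle produces the same expression. Thus 2) reduces to associativity of $\ast$ together with the automorphism property of $Ad_{B(a)}$ on $(G,\ast)$.

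For 3), assuming $(G,\cdot,\ast)$ is a skew left brace, so that $a\ast(u\cdot v)=(a\ast u)\cdot a^{-\cdot}\cdot(a\ast v)$ with $a^{-\cdot}$ the $\cdot$-inverse, I would check the identity $a\circ(b\cdot c)=(a\circ b)\cdot a^{-\cdot}\cdot(a\circ c)$ with $\cdot$ now playing the additive role. Here I first push $Ad_{B(a)}$ through the $\cdot$-product — legitimate because conjugation is always a $\cdot$-automorphism — to get $a\circ(b\cdot c)=a\ast\bigl(Ad_{B(a)}(b)\cdot Ad_{B(a)}(c)\bigr)$, and then apply the skew brace relation of $(G,\cdot,\ast)$ with $u=Ad_{B(a)}(b)$ and $v=Ad_{B(a)}(c)$ to recover $(a\circ b)\cdot a^{-\cdot}\cdot(a\circ c)$. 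The computations themselves are short; the hard part will be purely organizational, namely keeping the two group structures, their two distinct inverses $a^{-\ast}$ and $a^{-\cdot}$, and the two distinct automorphism properties of $Ad_{B(a)}$ (preserving $\ast$ by hypothesis, preserving $\cdot$ always) carefully separated, and making the appeal to the general RRB theorem in 1) clean enough that the group axioms for $\circ$ need not be re-established directly.
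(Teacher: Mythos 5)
Your proposal is correct, and it matches the paper's proof in outline while differing in two details worth recording. For part 1) the paper, like you, simply invokes the relative Rota--Baxter machinery (it cites Proposition 3.5 of the JSZ paper and of Rathee--Singh rather than the $Gr(B)$ graph-subgroup criterion recalled in the introduction, but these are the same body of results); the paper disposes of part 2) by the same citation, whereas you verify the skew brace identity for $(G,\ast,\circ)$ directly. Your short computation -- expand both sides, use that $Ad_{B(a)}\in\Aut(G,\ast)$ (which is exactly the stated compatibility hypothesis) and cancel $a^{-\ast}\ast a$ -- is valid and makes the statement self-contained, at the cost of re-proving a known lemma. For part 3) your argument has the same skeleton as the paper's but is cleaner in execution: you push $Ad_{B(a)}$ through $b\cdot c$ using only the fact that conjugation is always a $\cdot$-automorphism, and then apply the skew brace identity of $(G,\cdot,\ast)$ to $u=Ad_{B(a)}(b)$ and $v=Ad_{B(a)}(c)$. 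The paper instead moves the conjugation in and out of $\ast$-products via the $Ad$-compatibility hypothesis, and, as printed, its display conjugates by $a$ rather than by $B(a)$ -- so it literally covers only the case $B=\id$; your computation handles general $B$ and is the natural repair of that slip. One point you should make explicit when writing this up: parts 2) and 3) both presuppose that $(G,\circ)$ is a group, i.e., part 1), so the appeal to the JSZ result must come first.
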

\begin{proof}
    The first two properties are particular cases of~\cite[Proposition 3.5]{JSZ} and~\cite[Proposition 3.5]{RS-23}. Let us prove the third statement
\begin{multline*}
    a\circ (bc)=a\ast a(bc)a^{-1}=a(a\ast (bc))a^{-1}=a((a\ast b)a^{-1}(a\ast c))a^{-1}=\\ a(a\ast b)a^{-1}a^{-1}a(a\ast c)a^{-1}=(a\ast aba^{-1})a^{-1}(a\ast aca^{-1})=(a\circ b)a^{-1}(a\circ c).
\end{multline*}
\end{proof}

Let us use relative Rota--Baxter operator on groups to construct Rota--Baxter operators of weight $\lambda$ on groups.

\begin{definition}
Let $\lambda\in\mathbb R\setminus\{0\}$. Let $G$ be a group such that the map $f\colon G\to G$, $f(g)=g^\lambda$, is a well-defined bijection. Define a new group operation on $G$ by the formula
\[
g\ast h= f^{-1}(f(g)f(h))=(g^\lambda h^\lambda)^{\frac 1\lambda}.
\]
We say that a map $B\colon G\to G$ is a \emph{Rota--Baxter operator of weight $\lambda$} if $B$ satisfy the equation~\eqref{eq:ast_group_RB} for the operation $\ast$ defined above, i.e. the equation
$$
B(g) B(h) = B\left(\left( g^{\lambda} \, B(g) \, h^{\lambda} B(g)^{-1}  \right)^{\frac 1\lambda} \right).
$$
\end{definition}

\begin{example}
1) If   $G$ is a simple-connected solvable Lie group, then the map $f\colon G\to G$, $f(g)=g^\lambda$, is a well-defined bijection.

2) If  $\lambda = -1$, then $a * b = b a$ and the RB-operator  of weight $-1$  satisfies the well-known identity,
$$
C(g) C(h) = C(C(g) h C(g)^{-1} g).
$$
\end{example}

As a consequence of Theorem~\ref{thm:differetial_relative_RB} we have

\begin{proposition}
    Let $\mathcal B$ be a Rota--Baxter operator of weight $\lambda \not= 0$ on a Lie group $G$ and $B={\mathcal B}_{*1_G}$ its differential at the unit of $G$. Then $B$ is a Rota--Baxter operator of weight $\lambda$ on the Lie algebra $\mathfrak g$ of the Lie group $G$.
\end{proposition}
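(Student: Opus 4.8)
The plan is to recognize the weight-$\lambda$ group operator $\mathcal B$ as an instance of a relative Rota--Baxter operator and then apply Theorem~\ref{thm:differetial_relative_RB}, carefully tracking what the differentiated data become. By the definition preceding the statement, a Rota--Baxter operator of weight $\lambda$ on $G$ is exactly a relative Rota--Baxter operator $\mathcal B\colon H\to G$ in the sense of \eqref{eq:ast_group_RB}, where the source group is $H=(G,\ast)$, the target is $(G,\cdot)$, and the action is $\Phi=Ad$, i.e. $\Phi_g(h)=ghg^{-1}$ (conjugation with respect to $\cdot$), which is an automorphism of $(G,\ast)$ precisely by the compatibility of $\ast$ with conjugation recorded above. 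Thus Theorem~\ref{thm:differetial_relative_RB} applies directly and shows that $B=\mathcal B_{*1_G}$ is a relative Rota--Baxter operator of weight $1$ on $\mathfrak g$ with respect to $(\mathfrak h,\phi)$, where $\mathfrak h$ is the Lie algebra of $(G,\ast)$ and $\phi$ is the differentiated adjoint action. It then remains to identify $\mathfrak h$ and $\phi$ explicitly and to verify that the weight-$1$ relation \eqref{eq:relative_Lie_RB} collapses to the weight-$\lambda$ relation \eqref{eq:lambda_lie_RB}.

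First I would determine the Lie algebra $\mathfrak h$ of $(G,\ast)$. The key point is that $f(g)=g^\lambda$ is by construction an isomorphism of Lie groups $f\colon(G,\ast)\to(G,\cdot)$, since $f(g\ast h)=f(g)f(h)$. As the two group structures share the unit, they share $T_1G$ as underlying vector space, and writing $g=\exp(tX)$ gives $f(\exp(tX))=\exp(\lambda t X)$, so the differential is $f_{*1}=\lambda\cdot\id$. Transporting the bracket through the Lie algebra isomorphism $f_{*1}$ yields $\lambda[u,v]_{\mathfrak h}=f_{*1}([u,v]_{\mathfrak h})=[f_{*1}u,f_{*1}v]_{\mathfrak g}=\lambda^2[u,v]_{\mathfrak g}$, whence $[u,v]_{\mathfrak h}=\lambda[u,v]_{\mathfrak g}$. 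This reproduces exactly the rescaled bracket of Example~\ref{ex:lambda_lie_RB}.

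Next I would compute the differentiated action $\phi$. For fixed $g$, the automorphism $\Phi_g$ of $(G,\ast)$ is, as a map of manifolds, ordinary conjugation by $g$, so its differential at the unit is $Ad_g$ (the adjoint representation of $(G,\cdot)$), regarded as a linear endomorphism of $\mathfrak h=T_1G$. Differentiating $g\mapsto Ad_g$ at the unit gives $\phi(X)=ad_X$, that is $\phi(X)(Y)=[X,Y]_{\mathfrak g}$; one checks via the Jacobi identity that this is indeed a derivation of the rescaled bracket $[\cdot,\cdot]_{\mathfrak h}$. Substituting $\phi(B(u))v=[B(u),v]_{\mathfrak g}$, $\phi(B(v))u=[B(v),u]_{\mathfrak g}$ and $[u,v]_{\mathfrak h}=\lambda[u,v]_{\mathfrak g}$ into the weight-$1$ relation \eqref{eq:relative_Lie_RB} turns it into
$$[B(u),B(v)]_{\mathfrak g}=B\bigl([B(u),v]_{\mathfrak g}-[B(v),u]_{\mathfrak g}+\lambda[u,v]_{\mathfrak g}\bigr),$$
which is precisely \eqref{eq:lambda_lie_RB}, the defining relation of a weight-$\lambda$ Rota--Baxter operator on $\mathfrak g$.

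The main obstacle I anticipate is the bookkeeping in the identification step: one must be sure the natural identifications are consistent --- that $\mathfrak h$ and $\mathfrak g$ share the underlying vector space $T_1G$ but carry brackets differing by the factor $\lambda$, and that this factor enters through $[\cdot,\cdot]_{\mathfrak h}$ rather than through $\phi$. Once this is pinned down, the result is a direct application of Theorem~\ref{thm:differetial_relative_RB} followed by the substitution recorded in Example~\ref{ex:lambda_lie_RB}.
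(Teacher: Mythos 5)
Your proof is correct and follows essentially the same route as the paper: viewing $\mathcal B$ as a relative Rota--Baxter operator on $H=(G,\ast)$ with the adjoint action, computing $f_{*1_G}=\lambda\cdot\id$ to obtain $[\cdot,\cdot]_{\mathfrak h}=\lambda[\cdot,\cdot]_{\mathfrak g}$, and then invoking Theorem~\ref{thm:differetial_relative_RB} together with Example~\ref{ex:lambda_lie_RB}. Your version merely spells out more explicitly the identification of the differentiated action $\phi(X)=ad_X$ and the final substitution, which the paper leaves to the cited example.
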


\begin{proof}
    Let $H$ be the group Lie $(G,\ast)$. The commutator of its Lie algebra $\mathfrak h$ is a pull back of the commutator on $\mathfrak g$ by the differential $f_{*1_G}$ of the map $f$. Since for any $u\in\mathfrak g$ and $t\in\mathbb R$ $f(e^{tu})=e^{\lambda t u}$, we have $f_{*1_G}(u)=\lambda u$. Hence, $[\cdot,\cdot]_{\mathfrak h}=\lambda[\cdot,\cdot]_{\mathfrak g}$. Then $B$ is a Rota--Baxter operator of weight $\lambda$ by Theorem~\ref{thm:differetial_relative_RB} and Example~\ref{ex:lambda_lie_RB}.
\end{proof}

For Rota--Baxter operators of weight $0$, we can consider the following construction. Let $G$ be a Lie group such that the exponential map $\exp\colon\mathfrak g\to G$ is a bijection. Denote $\exp^{-1}=\ln$. Consider the commutative group $H=(G,\ast)$ with the multiplication
\[
g\ast h=\exp(\ln g+ \ln h).
\]
The group $H$ is isomorphic to the additive group $(\mathfrak g,+)$ and the isomorphism is compatible with the adjoint actions $Ad$ of the group $G$ on $H$ and $\mathfrak g$.


\begin{proposition}
    Let $\mathcal B$ be a relative Rota--Baxter operator on $H$. Then the differential $B={\mathcal B}_{*1_G}$ is a Rota--Baxter operator of weight $0$ on the Lie algebra $\mathfrak g$.
\end{proposition}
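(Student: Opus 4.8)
The plan is to follow the proof of the preceding proposition almost verbatim, with the power map $f(g)=g^\lambda$ replaced by the abelianization $\ln\colon H\to(\mathfrak g,+)$. First I would apply Theorem~\ref{thm:differetial_relative_RB} to the relative Rota--Baxter operator $\mathcal B\colon H\to G$, where $H=(G,\ast)$, taken with respect to the action $(H,\Phi)$ with $\Phi=Ad$. Since the unit of $H=(G,\ast)$ coincides with the unit $1_G$ of $G$, the base point is unchanged, so $B=\mathcal B_{*1_G}=\mathcal B_{*1_H}$, and the theorem guarantees that $B$ is a relative Rota--Baxter operator of weight $1$ on $\mathfrak g$ with respect to $(\mathfrak h,\phi)$, where $\mathfrak h$ is the Lie algebra of $H$ and $\phi$ is the differentiated action.

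The next step is to compute $\mathfrak h$ and $\phi$ explicitly. Because $H$ is isomorphic to the additive abelian group $(\mathfrak g,+)$, it is abelian, and hence its Lie algebra $\mathfrak h$ carries the trivial bracket $[\cdot,\cdot]_{\mathfrak h}=0$. The differential of the isomorphism $\ln$ at $1_G$ equals $(\exp_{*0})^{-1}=\id_{\mathfrak g}$, which identifies $\mathfrak h$ with $\mathfrak g$ as a vector space, now endowed with the zero bracket. Under this identification the compatibility of $\ln$ with the two adjoint actions, asserted in the construction of $H$, says precisely that the differentiated action is $\phi(x)(y)=[x,y]_{\mathfrak g}=ad_x(y)$.

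Finally I would substitute these data into the weight $1$ relative Rota--Baxter identity~\eqref{eq:relative_Lie_RB}. With $[\cdot,\cdot]_{\mathfrak h}=0=0\cdot[\cdot,\cdot]_{\mathfrak g}$ and $\phi=ad$, the situation is exactly that of Example~\ref{ex:lambda_lie_RB} with $\lambda=0$, so the identity collapses to
\[
[B(u),B(v)]_{\mathfrak g}=B\bigl([B(u),v]_{\mathfrak g}-[B(v),u]_{\mathfrak g}\bigr),\qquad u,v\in\mathfrak g,
\]
which is the defining relation of a Rota--Baxter operator of weight $0$ on $\mathfrak g$.

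The only point requiring care is the bookkeeping of the identification $\mathfrak h\cong\mathfrak g$: one has to verify that $\ln_{*1_G}=\id_{\mathfrak g}$ and that, through this identity, the differential of the conjugation action $h\mapsto ghg^{-1}$ recovers $Ad_g$, so that its derivative in $g$ is $ad$. These are the standard facts $\exp_{*0}=\id$ and $\frac{d}{dt}\big|_{0}Ad_{\exp(tx)}=ad_x$, so no computation beyond the compatibility already built into the definition of $H$ is needed; the genuine content is entirely supplied by Theorem~\ref{thm:differetial_relative_RB}.
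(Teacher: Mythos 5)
Your proposal is correct and follows essentially the same route as the paper: apply Theorem~\ref{thm:differetial_relative_RB}, observe that $\mathfrak h$ coincides with $\mathfrak g$ as a vector space but carries the zero bracket since $H$ is abelian, note that the differentiated action is $\phi=ad$, and conclude that the weight-$1$ relative identity collapses to the weight-$0$ Rota--Baxter identity (your form $[B(u),v]_{\mathfrak g}-[B(v),u]_{\mathfrak g}$ equals the paper's $[B(u),v]_{\mathfrak g}+[u,B(v)]_{\mathfrak g}$). The extra bookkeeping you include on $\ln_{*1_G}=\id_{\mathfrak g}$ and the citation of Example~\ref{ex:lambda_lie_RB} only make explicit what the paper leaves implicit.
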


\begin{proof}
    The Lie algebra $\mathfrak h$ of the group $H$ coincides with $\mathfrak g$ as a linear space and has trivial commutator because $H$ is commutative. By Theorem~\ref{thm:differetial_relative_RB} the operator $B\colon\mathfrak h=\mathfrak g\to\mathfrak g$ is a relative  Rota-Baxter operator on $g$, i.e.
\[
[B(u),B(v)]_{\mathfrak g}= B(\phi(B(u))(v)-\phi(B(v))(u)+[u,v]_{\mathfrak h})=B([B(u),v]_{\mathfrak g}+[u,B(v)]_{\mathfrak g})
\]
for all $u,v\in\mathfrak g$ since $\phi=ad$ is the adjoint operator. Thus, $B$ is a Rota--Baxter operator of weight $0$ on the Lie algebra $g$.
\end{proof}

\bigskip


\section{Some 2-generated Hopf algebras}  \label{CostHopf}

There are some generalizations of the Hopf algebra $H_4$ \cite{A, A1}.  Let $m \geq 2$ be a natural number, $\zeta$ be a primitive $m$-th root from unity in the field $\Bbbk$. It is known that this root exists if and only if $char(\Bbbk) \not|~ m$. An algebra $H_{m^2(\zeta)}$ with unit 1 is generated by elements $g$ and $x$, which satisfy the relations
$$
g^m = 1,~~x^m = 0,~~g x = \zeta x g.
$$
Then elements $(g^i x^k)_{0 \leq i,k \leq m-1}$ form a basis of $H_{m^2(\zeta)}$. Put
$$
\Delta(g) = g \otimes g,~~\Delta(x) = g \otimes x + x \otimes 1,~~\varepsilon(g) = 1,~~\varepsilon(x) = 0,~~S(g) = g^{-1},~~S(x) = -g^{-1} x.
$$
The Hopf algebra $H_{m^2(\zeta)}$ is said to be the Taft algebra. The algebra $H_4 = H_{4(-1)}$ is the  Sweedler algebra.

We suggest another generalization of $H_4$.

\begin{definition}
 Let  $m$ be a non-negative integer, $\zeta \in \Bbbk^*$,  $l \in \mathbb{N} \cup \{+\infty\}$.
If $l \in \mathbb{N}$, then  $f(x) \in \Bbbk[x]$, $deg(f(x)) < l$;  if $l = +\infty$, then $f(x) = 0$. We will denote  $H_{m, \zeta, l, f(x)}$,  an associative  algebra with unit,  that is generated by $g$ and $x$ and satisfies the next relations
$$
g^m = 1, ~~x^l = f(x),~~x g = \zeta g x.
$$
\end{definition}

It is easy to see that any element of $H_{m, \zeta, l, f(x)}$ can be presented as a linear combination of elements
$$
g^{\alpha} x^{\beta},~~0 \leq \alpha \leq m-1,~~0 \leq \beta \leq l-1,
$$
if $l \in \mathbb{N}$ and of elements
$$
g^{\alpha} x^{\beta},~~0 \leq \alpha \leq m-1,~~0 \leq \beta,
$$
if $l = + \infty$.

This algebra generalises some known 2-generated Hopf algebras (see \cite{A}).

\begin{example}
\begin{enumerate}
\item Algebra $A_{\infty}$   over field of characteristic 0 is generated by $g$ and $x$ and is defined by relations
$$
g^2 = 1,~~x g = -g x.
$$
Then $A_{\infty} = H_{2, -1, \infty, 0}$.

\item Algebra $A_{4p}$ over field of characteristic $p \geq 3$ is a $4p$-dimension algebra with generators $g$ and $x$ and with defined  relations
$$
g^2 = 1,~~x g = -g x,~~x^{2p} = 0.
$$
Then $A_{4p} = H_{2, -1, 2p, 0}$.

\item Algebra $A_{(q)}$, $q \in \Bbbk^*$  is a $4p$-dimension algebra with generators $g$ and $x$ and with defined  relations
$$
g^2 = 1,~~x g = -g x,~~x^{2p} = q^{p-1} x^2,~~p \geq 3.
$$
Then $A_{(q)} = H_{2, -1, 2p, q^{p-1} x^2}$.

\end{enumerate}
\end{example}

\begin{theorem} Let $\zeta^m =1$  and $f(x)$ satisfies
$$
f(\zeta x) = \zeta^l f(x).
$$
Then $H_{m, \zeta, l, f(x)}$ is an associative algebra with a linear basis
$$
g^{\alpha} x^{\beta},~~0 \leq \alpha \leq m-1,~~0 \leq \beta \leq l-1,
$$
if $l \in \mathbb{N}$ and with a linear basis
$$
g^{\alpha} x^{\beta},~~0 \leq \alpha \leq m-1,~~0 \leq \beta,
$$
if $l = + \infty$.
\end{theorem}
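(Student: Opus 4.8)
Associativity is automatic, since $H_{m,\zeta,l,f(x)}$ is by definition a quotient of the free associative $\Bbbk$-algebra on $g,x$; the whole content of the theorem is the basis statement. The plan is to apply Bergman's Diamond Lemma: I orient the three defining relations into a rewriting system, check that it terminates and that all overlap ambiguities are resolvable, and read off that the irreducible monomials form a $\Bbbk$-basis. Orient the relations as
$$
g^m \to 1, \qquad x^l \to f(x)\ \ (\text{present only when } l\in\mathbb N), \qquad xg \to \zeta\, gx .
$$
The rule $xg\to\zeta gx$ pushes every $g$ to the left of every $x$, the rule $g^m\to 1$ caps the exponent of $g$ below $m$, and $x^l\to f(x)$ caps the exponent of $x$ below $l$. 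Hence the irreducible words, i.e.\ those containing none of the forbidden subwords $g^m$, $x^l$, $xg$, are exactly $g^\alpha x^\beta$ with $0\le\alpha\le m-1$ and $0\le\beta\le l-1$ (respectively $\beta\ge 0$ when $l=+\infty$). Since any word rewrites to a linear combination of such monomials, these span $H_{m,\zeta,l,f(x)}$, giving one half of the claim at once.

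For termination I order monomials first by total degree, and among words of equal degree by the number of inversions, meaning the number of position pairs in which an $x$ precedes a $g$. The rules $g^m\to 1$ and $x^l\to f(x)$ strictly lower the degree (here I use $\deg f<l$), while $xg\to\zeta gx$ preserves the degree and lowers the inversion count by exactly one; a short check shows this partial order is compatible with left and right multiplication and satisfies the descending chain condition, so the system is terminating. Setting up this combined order, which must handle the two degree-lowering rules together with the degree-preserving commutation rule, is the one genuinely fiddly preliminary point.

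The heart of the argument is resolving the overlap ambiguities, and this is precisely where the two hypotheses are used. There are only two nontrivial overlaps. The first is the word $xg^m$, where $xg$ and $g^m$ share a single $g$: applying $g^m\to 1$ yields $x$, whereas pushing the $g$'s leftward via $xg\to\zeta gx$ and then applying $g^m\to 1$ yields $\zeta^m x$, so confluence holds exactly when $\zeta^m=1$. The second is the word $x^l g$, where $x^l$ and $xg$ share a single $x$: using $x^k g\to\zeta^k g x^k$, reducing $x^l\to f(x)$ first and then moving $g$ left gives $g\,f(\zeta x)$, while moving $g$ past all $l$ copies of $x$ first and then applying $x^l\to f(x)$ gives $\zeta^l g\,f(x)$, so confluence holds exactly when $f(\zeta x)=\zeta^l f(x)$. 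The self-overlaps of $g^m$ with itself and of $x^l$ with itself resolve without further conditions, and no subword of one rule's left side occurs inside another, so there are no inclusion ambiguities. With all ambiguities resolvable, the Diamond Lemma gives that the irreducible monomials are linearly independent, hence a basis.

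The case $l=+\infty$ is the same analysis with the rule $x^l\to f(x)$ and the condition $f(\zeta x)=\zeta^l f(x)$ both absent (the latter being vacuous since $f=0$), leaving only the ambiguity governed by $\zeta^m=1$. I expect the main obstacle to be exactly the two critical-overlap computations above: they are the only places where global consistency of the presentation can fail, and the stated hypotheses $\zeta^m=1$ and $f(\zeta x)=\zeta^l f(x)$ are precisely what make those overlaps confluent.
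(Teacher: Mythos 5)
Your proof is correct, and it takes a genuinely different route from the paper's. The paper does not set up a rewriting system or invoke Bergman's lemma: it starts from the algebra $A=\Bbbk\langle g,x\rangle/(xg-\zeta gx)$, whose basis $g^\alpha x^\beta$, $\alpha,\beta\ge 0$, is taken as known, and then shows that the \emph{linear span} $J$ of the elements $(g^m-1)g^\alpha x^\beta$ and $g^\alpha x^\beta(x^l-f(x))$ is already a two-sided ideal of $A$, so that $H_{m,\zeta,l,f(x)}=A/J$ and the stated monomials survive in the quotient. The computational core is nevertheless identical to yours: the paper's verification $g^{\alpha'}x^{\beta'}(g^m-1)g^\alpha x^\beta=\zeta^{\alpha\beta'}(g^m-1)g^{\alpha+\alpha'}x^{\beta+\beta'}$ is exactly your $xg^m$ overlap (this is where $\zeta^m=1$ enters), and the identity $(x^l-f(x))g=\zeta^l g(x^l-f(x))$ is exactly your $x^l g$ overlap (this is where $f(\zeta x)=\zeta^l f(x)$ enters). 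What the two framings buy is different. The paper's is shorter, but it outsources the basis of $A$ and, more importantly, it only \emph{asserts} that $A/J$ has the stated basis; the linear-independence half is not addressed, and it is not immediate from a leading-term argument, since the spanning elements of $J$ can share leading terms (e.g.\ $(g^m-1)x^l$ and $g^m(x^l-f(x))$ both have leading term $g^m x^l$). Your Diamond Lemma argument closes precisely that gap: once termination and resolvability of the two critical overlaps are checked, independence of the irreducible monomials comes for free.

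One small caveat for your write-up: your claim that there are no inclusion ambiguities is only true for $m\ge 2$ and $l\ge 2$. In the degenerate cases allowed by the paper's definition, the left-hand side $g$ of $g\to 1$ (when $m=1$), or $x$ of $x\to f(x)$ (when $l=1$), is a subword of $xg$, giving an inclusion ambiguity; these do resolve under the same two hypotheses (e.g.\ for $m=1$ one needs $\zeta=\zeta^m=1$), but they should be mentioned. The case $m=0$, where $g^m=1$ imposes no relation, is degenerate in both your argument and the paper's and is best excluded or treated separately.
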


\begin{proof} For simplicity, let us denote $H = H_{m, \zeta, l, f(x)}$.
Consider an associative algebra $A$ with generators $g$, $x$ and with one relation $xg=\zeta gx$. Then $A$ has linear basis $g^\alpha x^\beta$, $\alpha,\beta\ge 0$.

Let  $J$ be the linear subspace  in $A$ spanned by elements $(g^m-1)g^\alpha x^\beta$ and $g^\alpha x^\beta(x^l-f(x))$ (if $l\in\mathbb N$). The quotient space $\tilde H=A/J$ has linear basis
$$
g^{\alpha} x^{\beta},~~0 \leq \alpha \leq m-1,~~0 \leq \beta \leq l-1,
$$
if $l\in\mathbb N$, and $g^{\alpha} x^{\beta}$,$0 \leq \alpha \leq m-1$, $0 \leq \beta$, if $l=+\infty$.

Let us show that $J$ is an ideal in $A$ and hence, $\tilde H=H$.
Indeed, the equality $\zeta^m = 1$ implies
$$
g^{\alpha'} x^{\beta'} (g^m-1)g^\alpha x^\beta x =\zeta^{\alpha\beta'} (\zeta^{m\beta'}g^m-1)g^{\alpha+\alpha'} x^{\beta+\beta'} = \zeta^{\alpha\beta'}(g^m-1)g^{\alpha+\alpha'} x^{\beta+\beta'}\in J
$$
for any $\alpha', \beta'$.
Further, from the equality $\zeta^l f(x) = f(\zeta x)$ it follows $(x^l-f(x))g =\zeta^l  g(x^l-f(x))$. Hence,
$$
g^\alpha x^\beta(x^l-f(x))g^{\alpha'}x^{\beta'} =\zeta^{\alpha'\beta+\alpha'l}  g^{\alpha+\alpha'}x^{\beta+\beta'}(x^l-f(x))\in J
$$
for any $\alpha', \beta'$.

Hence, $H=\tilde H$ is an algebra with linear basis $g^p x^q$, $0 \leq p < m$, $0 \leq q < l$.
\end{proof}

Further we define  a Hopf algebra structure on $H_{m, \zeta, l, f(x)}$.

Recall that that the quantum binomial coefficients are defined as the coefficients in the equality
$$
(u + v)^p = \sum_{q=0}^p {p \choose q}_{\zeta} u^{p-q} v^q
$$
in the algebra $A_\zeta=\Bbbk[u,v]/(vu-\zeta uv)$.

\begin{theorem} Denote $f(x)=\sum_{p=0}^{l-1}a_px^p$. Let the next conditions hold

\begin{enumerate}
    \item $f(0)=0$, i.e. $a_0=0$.
    \item For any $p$ we have $a_p (g^{l-p} -1) = 0$ $\Leftrightarrow$ for any $p$ such that $a_p \not= 0$ holds $m | (l-p)$;

    \item For any $1 < q < l$ holds ${l \choose q}_{\zeta}= 0$;

    \item For any $p$ such that $a_p \not= 0$ and for any $q$, $1 < q < p$, holds ${p \choose q}_{\zeta}= 0$.

\end{enumerate}

The next operations
$$
\Delta(g) = g \otimes g,~~\Delta(x) = x \otimes 1 + g \otimes x,~~\varepsilon(g) = 1,~~\varepsilon(x) = 0,~~S(g) = g^{-1},~~S(x) = - g^{-1} x
$$
define a Hopf algebra on  $H_{m, \zeta, l, f(x)}$.
\end{theorem}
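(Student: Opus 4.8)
The plan is to verify directly that the prescribed maps $\Delta,\varepsilon,S$ extend from the generators $g,x$ to well-defined homomorphisms of $H=H_{m,\zeta,l,f(x)}$ ($\Delta,\varepsilon$ as algebra maps, $S$ as an anti-homomorphism), and then that they satisfy the coalgebra, counit and antipode axioms. Two standard reductions drive everything. Since $H$ is generated as an algebra by $g$ and $x$, a map prescribed on these generators extends precisely when the images of $g,x$ satisfy the three defining relations $g^m=1$, $xg=\zeta gx$, $x^l=f(x)$. And each Hopf axiom, being an equality of algebra homomorphisms out of $H$ (for coassociativity and the counit identity), or an identity whose validity set is a subalgebra (for the antipode axiom), need only be checked on $g$ and $x$.

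First I would check that $\Delta(g)=g\otimes g$, $\Delta(x)=x\otimes 1+g\otimes x$ respect the relations. The relation $g^m=1$ is immediate, $\Delta(g)^m=g^m\otimes g^m=1\otimes 1$, and a one-line computation gives $\Delta(x)\Delta(g)=\zeta\,\Delta(g)\Delta(x)$ with no conditions needed. The essential point, which I expect to be the main obstacle, is the relation $x^l=f(x)$, i.e. the identity $\Delta(x)^l=f(\Delta(x))=\sum_p a_p\Delta(x)^p$ in $H\otimes H$. Setting $v=x\otimes 1$ and $u=g\otimes x$ one checks $vu=\zeta uv$, so $\Delta(x)=u+v$ sits inside a copy of $A_\zeta$ and the quantum binomial theorem gives
\[
\Delta(x)^p=\sum_{q=0}^{p}{p\choose q}_{\zeta}\,g^{p-q}x^{q}\otimes x^{p-q}.
\]
This is exactly where conditions (3) and (4) enter: they annihilate the intermediate quantum binomial coefficients, so that only the boundary terms $q=0$ and $q=p$ survive in each $\Delta(x)^p$ with $a_p\neq 0$ and in $\Delta(x)^l$. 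After this collapse and the substitution $x^l=f(x)$, one gets $\Delta(x)^l=g^{l}\otimes f(x)+f(x)\otimes 1$ and $f(\Delta(x))=\sum_p a_p\,g^{p}\otimes x^{p}+f(x)\otimes 1$, so the identity reduces to $\sum_p a_p\,g^{p}\otimes x^{p}=\sum_p a_p\,g^{l}\otimes x^{p}$. By linear independence of the $x^{p}$ in the second tensor factor this holds term by term iff $a_p(g^{l-p}-1)=0$ for every $p$, which is precisely condition (2). Carefully tracking which binomials vanish under (3) and (4) is the delicate step here.

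Next I would treat $\varepsilon$ and $S$. For $\varepsilon(g)=1$, $\varepsilon(x)=0$, the relations $g^m=1$ and $xg=\zeta gx$ give $1=1$ and $0=0$, while $x^l=f(x)$ becomes $0=f(0)=a_0$, which is exactly condition (1); thus (1) is what makes $\varepsilon$ compatible with the relation $x^l=f(x)$. For $S(g)=g^{-1}$, $S(x)=-g^{-1}x$, read as an anti-homomorphism, I would verify that $S$ respects the relations: $S(g)^m=1$ and $S(g)S(x)=\zeta\,S(x)S(g)$ are short computations, and for $x^l=f(x)$ one computes $(g^{-1}x)^k=\zeta^{-\binom{k}{2}}g^{-k}x^k$, so that $S(x)^l=(-1)^l\zeta^{-\binom{l}{2}}g^{-l}f(x)$ must agree with $f(S(x))=\sum_p a_p(-1)^p\zeta^{-\binom{p}{2}}g^{-p}x^p$. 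Condition (2), which yields $m\mid l-p$ and hence $g^{-l}=g^{-p}$, together with $\zeta^m=1$, makes the scalar and group-like factors coincide for each $p$ with $a_p\neq 0$; this is a careful but routine calculation.

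Finally I would verify the remaining axioms on the generators only. Coassociativity and the counit identity are standard: $\Delta(g)=g\otimes g$ is group-like, and for $x$ both $(\Delta\otimes\id)\Delta(x)$ and $(\id\otimes\Delta)\Delta(x)$ equal $x\otimes 1\otimes 1+g\otimes x\otimes 1+g\otimes g\otimes x$, while $(\varepsilon\otimes\id)\Delta(x)=x=(\id\otimes\varepsilon)\Delta(x)$. For the antipode axiom one checks $S(g)g=gS(g)=1=\varepsilon(g)1$ and $S(x)\cdot 1+S(g)\cdot x=-g^{-1}x+g^{-1}x=0=\varepsilon(x)1$ (and symmetrically for $m(\id\otimes S)\Delta$); since the set of elements on which $m(S\otimes\id)\Delta=m(\id\otimes S)\Delta=\varepsilon\cdot 1_H$ holds is a subalgebra, this establishes the antipode axiom on all of $H$. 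Together with the verifications above, this shows that $H_{m,\zeta,l,f(x)}$ with the stated multiplication, comultiplication $\Delta$, counit $\varepsilon$ and antipode $S$ is a Hopf algebra.
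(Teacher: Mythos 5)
Your treatment of $\Delta$ and $\varepsilon$ coincides with the paper's proof: the same quantum binomial expansion of $(x\otimes 1+g\otimes x)^p$, the same collapse to the boundary terms $q=0,p$ via conditions (3)--(4), condition (2) extracted from the surviving terms by linear independence in the second tensor factor, and condition (1) from $\varepsilon(x^l)=f(0)$. (Both you and the paper tacitly also need the $q=1$ coefficients ${l \choose 1}_{\zeta}$, ${p \choose 1}_{\zeta}$ to vanish, which the strict inequalities $1<q$ in (3)--(4) do not literally provide; that off-by-one is in the paper's own statement and proof, so I do not count it against you.)

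The antipode is where you genuinely diverge from the paper, and where your proposal has a real gap. The paper never extends $S$ anti-multiplicatively from the generators: it defines $S$ as a linear map on the basis $g^px^q$ by an explicit closed formula and verifies $m(S\otimes\id)\Delta(g^px^q)=\varepsilon(g^px^q)1$ (and the mirror identity) directly on every basis element, the computation terminating because the Cauchy binomial identity $\prod_{t=0}^{q-1}(1+\zeta^t u)=\sum_{t=0}^{q}{q \choose t}_{\zeta}\zeta^{t(t-1)/2}u^t$ produces the factor $1-\zeta^{0}=0$. That route needs no compatibility of $S$ with the defining relations at all; anti-multiplicativity then comes for free from the Hopf axioms. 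Your route, by contrast, stands or falls with the claim that $S$ is a well-defined anti-homomorphism, i.e.\ with $S(x)^l=f(S(x))$. You correctly reduce this to $g^{-l}=g^{-p}$ (which condition (2) gives) together with the scalar identity $(-1)^l\zeta^{-l(l-1)/2}=(-1)^p\zeta^{-p(p-1)/2}$ for each $p$ with $a_p\neq 0$, and you assert the latter follows from condition (2) and $\zeta^m=1$ by a routine calculation. It does not: for $m=2$, $\zeta=-1$, $l=4$, $p=2$ one has $m\mid l-p$ and $\zeta^m=1$, yet $(-1)^4\zeta^{-6}=1$ while $(-1)^2\zeta^{-1}=-1$. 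The scalar identity is true under the full hypotheses, but only because conditions (3)--(4) drastically restrict which data $(l,p,\zeta,\operatorname{char}\Bbbk)$ can occur (a $q$-Lucas-type analysis of when all intermediate Gaussian binomials vanish, essentially forcing $l$ and each such $p$ to be the order of $\zeta$ times a power of the characteristic); verifying it is then the hardest computation in the whole theorem, and it is exactly what your phrase ``careful but routine'' skips, with insufficient cited ingredients. You can close the gap either by carrying out that analysis, or by reversing the order of quantifiers as the paper does: prove the antipode identity on the whole basis first (no anti-homomorphism well-definedness needed), after which compatibility of $S$ with the relations is automatic. The rest of your scaffolding (generators-suffice reductions for algebra maps, the subalgebra lemma for the antipode identity) is correct and standard.
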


\begin{proof} For simplicity, let us denote $H = H_{m, \zeta, l, f(x)}$.
Let
$$
f(x) = \sum_p a_p x^p,~~a_p \in \Bbbk.
$$
From the equality $\zeta^l f(x) = f(\zeta x)$ follows that $\zeta^l a_p x^p = a_p \zeta^p x^p$. Hence, for any $p$ such that $a_p \not= 0$, we get $\zeta^{l-p} = 1$.
It equivalent to condition that if $d$ is a minimal number such that $\zeta^d = 1$ is a primitive root, then $d | m$ and $d | (l-p)$ for any $p$ such that $a_p \not=0$.

Let us show that $H$ is a Hopf algebra.

We have $\varepsilon (g^m) = \varepsilon(1) = 1$.

Next relation,
$$
\varepsilon(x g) = \varepsilon(x) \varepsilon (g) = 0 = \varepsilon (\zeta gx) = \zeta \varepsilon (g) \varepsilon (x).
$$

Further, $0 = \varepsilon (x^l) = \varepsilon (f(x)) = f(0) \cdot 1$. Hence, we get condition $f(0) = 0$.

Considering the actions of $\Delta$ on the relations, we get
$$
\Delta(g^m) = \Delta(g)^m = (g \otimes g)^m = g^m \otimes g^m = 1 = \Delta(1).
$$

Further,
\begin{multline*}
\Delta(x g) = \Delta(x) \Delta(g)= \\ = (x \otimes 1+ g \otimes x) (g \otimes g) = xg \otimes g + g^2 \otimes xg = \zeta (g x \otimes g + g^2 \otimes g x) =
\\
= \zeta  (g \otimes g) (x \otimes 1+ g \otimes x) = \zeta \Delta(g) \Delta(x) = \Delta(\zeta g x).
\end{multline*}

The last relation
$$
0 = \Delta(x^l - f(x)) = \Delta(x)^l - f(\Delta(x) )  = (x \otimes 1+ g \otimes x)^l - \sum_p a_p  (x \otimes 1+ g \otimes x)^p =
$$
$$
= \sum_{q=1}^l {l \choose q}_{\zeta} g^{l-q} x^q \otimes x^{l-q} -
\sum_{p<l} a_p \sum_{q=0}^p {p \choose q}_{\zeta} g^{p-q} x^q \otimes x^{p-q} =
$$
$$
= {l \choose 0}_{\zeta} g^{l}  \otimes x^{l} + {l \choose l}_{\zeta} x^{l}  \otimes 1 +
\sum_{q=1}^{l-1}  {l \choose q}_{\zeta} g^{l-q} x^q \otimes x^{l-q} -
\sum_{p<l} a_p \sum_{q=0}^p {p \choose q}_{\zeta} g^{p-q} x^q \otimes x^{p-q} =
$$
$$
= \sum_{p<q} a_p (g^l - g^p) \otimes x^{p} + \sum_{p<q} a_p (1 - 1) x^p \otimes 1 +
$$
$$
+\sum_{q=1}^{l-1}  {l \choose q}_{\zeta} g^{l-q} \, x^{q} \otimes x^{l-q} -
\sum_{p<l} a_p \sum_{q=1}^{p-1} {p \choose q}_{\zeta} g^{p-q} x^q \otimes x^{p-q}.
$$
To have equality $0 = \Delta(x^l - f(x))$ the conditions 1)-3) of proposition must be hold.

We used the following fact
$$
(x \otimes 1) (g \otimes x) = \zeta (g \otimes x)(x \otimes 1),
$$
 hence, 
$$
(x \otimes 1 + g \otimes x)^p = \sum_{q=0}^p {p \choose q}_{\zeta}  gx^{q} \otimes  x^{p-q}.
$$

Let us define a map $S$ by  the formula
$$
S(g^p x^q) = (-1)^q  \zeta^{-pq - q(q-1)/2} g^{-p} x^q,
 $$
and check that $S$ is an antipode.

Suppose that $q=0$, then $S(g^p) = g^{-p}$ and
$$
m (S \otimes \id) \Delta(g^p) = g^{-p} g^p = 1 = \varepsilon(g^p) \cdot 1,~~m (\id \otimes S) \Delta(g^p) = g^{p} g^{-p} = 1 = \varepsilon(g^p) \cdot 1.
$$

Suppose that $q>0$, then
$$
m (S \otimes \id) \Delta(g^p x^q) = (S \otimes \id) (g^p \otimes g^p) (x \otimes 1 + g \otimes x)^q = m (S \otimes \id) \sum_t  {q \choose t}_{\zeta} g^{p+q-t} x^t \otimes g^p x^{q-t} =
$$
$$
=m\left( \sum_{t=0}^q  {q \choose t}_{\zeta} (-1)^t \zeta^{-(p+q-t)t -t(t-1)/2} g^{-(p+q -t)t} x^t \otimes g^p x^{q-t}\right) =
$$
$$
= \sum_{t=0}^q  {q \choose t}_{\zeta} (-1)^t \zeta^{-(p+q-t)t -t(t-1)/2} g^{-(p+q)} \zeta^{pt}  g^p x^{q} =
\sum_{t=0}^q  {q \choose t}_{\zeta} (-1)^t \zeta^{-(q-t)t -t(t-1)/2}   g^{-q} x^{q} =
$$
$$
= \sum_{t=0}^q  {q \choose t}_{\zeta}  \zeta^{t(t-1)/2}  (- \zeta^{1-q})^t g^{-q} x^{q} = \prod_{t=0}^{q-1} (1 + \zeta^t (- \zeta^{-(q-1)}))\cdot g^{-q} x^{q}= 0 = \varepsilon(g^p x^q).
$$
Here we used the Cauchy binomial theorem:
$$
\prod_{t=0}^{q-1} (1 + \zeta^t u) = \sum_{t=0}^{q} {q \choose t}_{\zeta} \zeta^{t(t-1)/2} u^t.
$$

On the other side,
$$
m(\id \otimes S) \Delta(g^p x^q) = m(\id \otimes S) \sum_{t=0}^q  {q \choose t}_{\zeta}     g^{p+q-t} x^{qt} \otimes g^p x^{q-t} =
$$
$$
=\sum_{t=0}^q  {q \choose t}_{\zeta}  g^{p+q-t} x^{t}  (-1)^{q-t} \zeta^{-p(q-t) - (q-t)(q-t-1)/2} g^{-p-q+t} x^{q-t} =
$$
$$
=\sum_{t=0}^q  {q \choose t}_{\zeta}   (-1)^{q-t} \zeta^{-pq - t(q-t) - (q-t)(q-t-1)/2}x^{q} =
$$
$$
= \zeta^{-pq} \sum_{s=0}^q  {q \choose s}_{\zeta} (-1)^s \zeta^{-s(q-s) -  s(s-1)/2} x^{q}
= \zeta^{-pq} \sum_{s=0}^q  {q \choose s}_{\zeta}  \zeta^{s(s-1)/2} (-\zeta^{q+1})^s \cdot x^{q}= 0,
$$
where $s = q-t$ and we used the property of binomial coefficients,
$$
 {q \choose t}_{\zeta}  =  {q \choose q-t}_{\zeta}.
$$
\end{proof}

\begin{theorem}
1) If $m \not=0$, then $\psi \in \Aut(H_{m, \zeta, l, f(x)})$ has the form
$$
\psi(g) = g^k, ~~\psi(x) =  \sum_{q\equiv k \pmod m} c_{0,q} x^q,
$$
where
\begin{enumerate}
\item  $(k,m) = 1$,

\item for any $q$ such that $c_{0,q} \not= 0$ and for any $0 < t < q$ we have ${q \choose t}_{\zeta} = 0$,

\item $k^2 \equiv 1 \pmod d$,

\item $(u^l - f(u)) | (\psi(u)^l - f(\psi(u)))$ in $\Bbbk[u]$.

\end{enumerate}

2) If $m =0$, then $\psi \in \Aut(H_{m, \zeta, l, f(x)})$ has the form $\psi(g) = g$,  $\psi(x) =  c x$, $c \in \Bbbk$, where
 for any $p$ such that $a_p \not= 0$  we have $c^{l-p} = 1$.

 In particular, if $f(x) \not= 0$, then $\Aut(H_{m, \zeta, l, f(x)}) \cong \mathbb{Z}_N$, where $N = \gcd \{l-p ~|~a_p \not= 0 \}$.
  If $f(x) = 0$, then $\Aut(H_{m, \zeta, l, f(x)}) \cong \Bbbk^*$.

\end{theorem}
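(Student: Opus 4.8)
The plan is to exploit the Hopf-algebra structure throughout: an automorphism $\psi$ must respect $\Delta$ and $\varepsilon$ (and then automatically $S$, by uniqueness of the antipode), so the strategy is to pin down $\psi(g)$ and $\psi(x)$ first from the coalgebra data and only afterwards from the defining relations. First I would determine $\psi(g)$. Since $g$ is grouplike and $\psi$ is a coalgebra map, $\psi(g)$ is again grouplike; as the grouplike elements of $H_{m,\zeta,l,f(x)}$ form the cyclic group $\langle g\rangle$, we get $\psi(g)=g^k$. Surjectivity then forces $\langle g^k\rangle=\langle g\rangle$, i.e. $(k,m)=1$, which is condition~(1). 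When $\zeta\neq 1$ it is convenient to record in parallel that the two-sided ideal $I=(x)$ is exactly the commutator ideal of $H$, since $[x,g]=(\zeta-1)gx$ while $H/I\cong\Bbbk[g]/(g^m-1)$ is commutative; hence $\psi(I)=I$ and in particular $\psi(x)\in I$.

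Next I would determine $\psi(x)$. Applying $\psi\otimes\psi$ to $\Delta(x)=x\otimes 1+g\otimes x$ shows that $y:=\psi(x)$ is a $(1,g^k)$-skew-primitive, $\Delta(y)=y\otimes 1+g^k\otimes y$, with $\varepsilon(y)=0$ and $y\in I$. To classify such elements I would expand, for each basis power $x^b$ with $b<l$,
\[
\Delta(x^b)=\sum_{t=0}^b {b\choose t}_{\zeta}\, g^{b-t}x^{t}\otimes x^{b-t},
\]
write $y=\sum_{a,b}c_{a,b}g^ax^b$ (no $b=0$ term, as $y\in I$), and compare $\Delta(y)$ with $y\otimes 1+g^k\otimes y$ using the double $x$-grading on $H\otimes H$. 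Each middle basis monomial $g^{a+b-t}x^{t}\otimes g^{a}x^{b-t}$ with $0<t<b$ has positive $x$-degree in both tensor slots and is produced by a unique triple $(a,b,t)$, so it cannot appear on the right-hand side; this forces ${b\choose t}_{\zeta}=0$ for $0<t<b$ whenever $c_{a,b}\neq0$, giving condition~(2). The surviving $t=b$ term $g^ax^b\otimes g^a$ has slot-$2$ degree $0$ and slot-$1$ degree $b\ge1$, so it must match $y\otimes1$; comparing coefficients kills every $c_{a,b}$ with $a\neq0$. For the remaining $a=0$ terms the $t=0$ term is $g^{b}\otimes x^{b}$, which must match $g^k\otimes y$, forcing $b\equiv k\pmod m$. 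This yields $\psi(x)=\sum_{q\equiv k\,(\mathrm{mod}\,m)}c_{0,q}x^q$. I expect this classification of skew-primitives to be the main obstacle, since it is the step where the coalgebra structure does the real work and where one must argue carefully that no mixed monomials $g^ax^b$ with $a\neq0$ survive.

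It then remains to read off conditions~(3) and~(4) from the two algebra relations. Applying $\psi$ to $xg=\zeta gx$ and using $x^qg^k=\zeta^{qk}g^kx^q$ gives $\zeta^{qk}=\zeta$, i.e. $qk\equiv 1\pmod d$, for every $q$ with $c_{0,q}\neq0$, where $d$ denotes the multiplicative order of $\zeta$ (so $d\mid m$). Since $q\equiv k\pmod m$ implies $q\equiv k\pmod d$, we obtain $k^2\equiv qk\equiv 1\pmod d$, which is condition~(3); such a $q$ exists because $\psi(x)\neq0$. Applying $\psi$ to $x^l=f(x)$ says that $P(x)^l=f(P(x))$ holds in the subalgebra $\Bbbk[x]\cong\Bbbk[x]/(x^l-f(x))$, where $P(u)=\sum_q c_{0,q}u^q$; this is exactly the divisibility $(u^l-f(u))\mid(\psi(u)^l-f(\psi(u)))$ of condition~(4). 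Conversely, running these computations backwards shows that any $k$ and coefficients satisfying (1)--(4) define a bialgebra endomorphism which preserves $\Delta,\varepsilon$ and the relations, commutes with $S$, and is bijective; hence the listed conditions are also sufficient.

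Finally, for $m=0$ the element $g$ has infinite order, so $\psi(g)=g^k$ with $k=\pm1$; the same skew-primitive analysis gives $q=k$, and since $q\ge1$ only $k=1$ survives, whence $\psi(g)=g$ and $\psi(x)=cx$ for some $c\in\Bbbk^*$. Condition~(4) then reads $(cx)^l=f(cx)$, i.e. $c^{l}a_p=c^{p}a_p$ and thus $c^{l-p}=1$ for every $p$ with $a_p\neq0$. The admissible $c$ therefore form the group of $N$-th roots of unity with $N=\gcd\{\,l-p : a_p\neq0\,\}$, giving $\Aut\cong\mathbb{Z}_N$ when $f\neq0$, and the whole of $\Bbbk^*$ (no constraint) when $f=0$.
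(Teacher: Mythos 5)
Your argument for the theorem as stated is correct and is essentially the paper's own proof, carried out with more care. The paper's proof runs exactly along your lines: $\psi(g)$ is group-like, hence $\psi(g)=g^k$ with $(k,m)=1$ (resp.\ $g^{\pm 1}$ for $m=0$); comparing $\Delta\psi(x)$ with $\psi(x)\otimes 1+g^k\otimes\psi(x)$ yields the form of $\psi(x)$ and condition (2); the relation $xg=\zeta gx$ yields $k^2\equiv 1\pmod d$; and $x^l=f(x)$ yields the divisibility (4). Where the paper merely asserts ``hence, one can present $\psi(x)$ in the form $\sum_{q\equiv k}c_{0,q}x^q$'', you supply the actual coefficient comparison (the bidegree argument killing the mixed monomials $g^ax^b$, $a\neq 0$, and the middle terms $0<t<b$), and you also spell out the $m=0$ case and the resulting computation of $\Aut$, which the paper's proof leaves implicit. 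One small repair: your exclusion of the $b=0$ monomials via ``$(x)$ is the commutator ideal'' requires $\zeta\neq 1$, and $\zeta=1$ is allowed by the definition (e.g.\ in characteristic $p$ with $l=p$); but this crutch is unnecessary, since comparing the $x$-bidegree $(0,0)$ parts of $\Delta y=y\otimes 1+g^k\otimes y$ forces all $c_{a,0}=0$ directly.

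The one genuine error is your closing converse claim: conditions (1)--(4) do not imply that the resulting endomorphism is bijective. Take $\psi(g)=g$, $\psi(x)=0$: since $f(0)=0$, all defining relations are preserved, $\psi$ is a coalgebra map, and (1)--(4) hold (with (2) vacuous and (4) reading $(u^l-f(u))\mid 0$), yet $\psi$ is neither injective nor surjective. So (1)--(4) characterize a class of Hopf-algebra endomorphisms rather than automorphisms, and bijectivity is a genuinely extra requirement. This overreach does not damage your proof of the statement itself, which (like the paper's) asserts only the necessity of (1)--(4); and in the $m=0$ ``in particular'' part, where sufficiency is actually needed, your argument is sound because there you restrict to $c\in\Bbbk^*$, for which the inverse automorphism $g\mapsto g$, $x\mapsto c^{-1}x$ is evident.
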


\begin{proof}
Let $\psi \in \Aut(H_{m, \zeta, l, f(x)})$. Then image of $g$ is a group-like element. Hence, $\psi(g) = g^k$, where $(k, m) = 1$ if  $m \not=0$, and $\psi(g) = g^{\pm 1}$  if  $m=0$.

Let us find the image of $x$. Suppose that
$$
\psi(x) = \sum_{p=0}^{m-1} \sum_{q=0}^{l-1} c_{p,g} g^p x^q,~~c_{p,g} \in \Bbbk.
$$
We have to prove that $\Delta \psi(x) = \psi (\Delta(x))$. We have  $\psi (\Delta(x)) = \psi(x) \otimes 1 + g^k \otimes \psi(x)$ and
$$
\Delta \psi(x) = \sum_{p=0}^{m-1} \sum_{q=0}^{l-1} c_{p,q} \sum_{t=0}^q {q \choose t}_{\zeta} g^{p+q-t} x^t \otimes g^p x^{q-t}.
$$
Hence, one can present $\psi(x)$ in the form
$$
\psi(x) = \sum_{q\equiv k \pmod m} c_{0,q} x^q.
$$
We have condition, that for any $q$ such that $c_{0,q} \not= 0$ and for any $0 < t < q$ we have ${q \choose t}_{\zeta} = 0$.

Let us check the relation $\psi(x g) = \zeta \psi(g x)$. The left hand side has the form
$$
\psi(x g) = \sum_q c_{0,q} x^q g^k = g^k \sum_q c_{0,q} \zeta^{kq} x^q.
$$
The  right hand side has the form
$$
\psi(\zeta g x) =  g^k \sum_q c_{0,q} \zeta x^q.
$$
Hence, for any $q$ such that $ c_{0,q} \not= 0$ we have $\zeta^{kq} = \zeta$. From it follows that $kq \equiv 1 \pmod d$. Take in attention that $q \equiv k \pmod d$, we get $k^2 \equiv 1 \pmod d$.

From relation $0 = \psi(x^l -f(x)) = \psi(x)^l -f(\psi(x))$ follows that $u^l - f(u)$ divides $\psi(u)^l - f(\psi(u))$ in $\Bbbk[u]$.
\end{proof}

\begin{corollary}
1) $\Aut_{Hopf}(H_4) = \Bbbk^*$;

2) $\Aut_{Hopf}(A_{\infty}) = \Bbbk^*$.
\end{corollary}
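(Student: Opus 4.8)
The plan is to read off both results directly from the preceding automorphism theorem, after noticing that each algebra is an instance of its first case. Indeed $H_4 = H_{2,-1,2,0}$ and $A_\infty = H_{2,-1,\infty,0}$, so in both situations $m = 2 \neq 0$, $\zeta = -1$ (whence the order $d$ of $\zeta$ equals $2$), and $f(x)=0$; the sole difference is $l=2$ for $H_4$ versus $l=+\infty$ for $A_\infty$. Since the theorem is proved by enforcing $\Delta\psi = \psi\Delta$, it characterizes exactly the Hopf automorphisms, which is what $\Aut_{Hopf}$ denotes. Let $\psi$ be such an automorphism.

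First I would pin down $\psi(g)$. By part 1) we have $\psi(g)=g^k$ with $(k,m)=(k,2)=1$, so $k$ is odd, and because $g^2=1$ this gives $\psi(g)=g$; condition (iii), $k^2\equiv 1 \pmod d$ with $d=2$, is then automatic. Next I would determine $\psi(x)$. The theorem forces $\psi(x)=\sum_{q\equiv k\pmod 2} c_{0,q}\,x^q$, a sum over \emph{odd} exponents $q$ (with $q\le l-1$, i.e.\ only $q=1$ for $H_4$ and all odd $q\ge 1$ for $A_\infty$), subject to condition (ii): whenever $c_{0,q}\neq 0$ one needs ${q \choose t}_{-1}=0$ for all $0<t<q$. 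The decisive point is the case $t=1$, where ${q \choose 1}_{-1}=1+(-1)+\dots+(-1)^{q-1}=1\neq 0$ for odd $q$. Hence no odd $q>1$ can occur, and $\psi(x)=c\,x$ with $c=c_{0,1}$. For $H_4$ this is immediate from the restriction $q\le 1$; for $A_\infty$ it is precisely this geometric-sum computation that eliminates the higher odd powers.

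Finally I would check the remaining condition and assemble the group. Condition (iv), $(u^l-f(u))\mid(\psi(u)^l-f(\psi(u)))$, is vacuous for $A_\infty$ (there $l=\infty$ and $f=0$, so there is no relation on powers of $x$), while for $H_4$ it asks $u^2\mid (cu)^2=c^2u^2$, which holds for every $c$. Thus $\psi(g)=g,\ \psi(x)=cx$ defines a Hopf endomorphism for each $c\in\Bbbk$, and it is invertible exactly when $c\neq 0$, with inverse $g\mapsto g,\ x\mapsto c^{-1}x$. Since the composite $\psi_c\circ\psi_{c'}$ sends $x\mapsto cc'x$, the assignment $c\mapsto\psi_c$ is a group isomorphism $\Bbbk^*\cong\Aut_{Hopf}$, establishing both 1) and 2). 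The only genuine content beyond bookkeeping is the nonvanishing of ${q \choose 1}_{-1}$ for odd $q$, the main obstacle being to rule out the a priori possible higher-degree terms in $\psi(x)$ for $A_\infty$; everything else is a direct specialization of the theorem's four conditions to the data $m=2$, $\zeta=-1$, $f=0$.
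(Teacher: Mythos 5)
Your proof is correct and matches the paper's approach: the corollary is stated there without proof as an immediate specialization of the preceding automorphism theorem (with $H_4 = H_{2,-1,2,0}$, $A_\infty = H_{2,-1,\infty,0}$, $m=2\neq 0$), which is exactly what you carry out. The computation ${q \choose 1}_{-1}=1\neq 0$ for odd $q$, which eliminates the higher-degree terms in $\psi(x)$ for $A_\infty$, is precisely the detail the paper leaves implicit.
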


\bigskip


\section*{Acknowledgments}
Authors are grateful to V.~N.~Zhelyabin, V.~Gubarev, M.~Goncharov, A. Pozhidaev,  and P. Kolesnikov for the fruitful discussions and useful suggestions.
Authors are also grateful to participants of the seminar ``\'{E}variste Galois''
at Novosibirsk State University for attention to our work.

Valery G. Bardakov is supported by the state contract of the Sobolev Institute of Mathematics, SB RAS (no. I.1.5, project FWNF-2022-0009).


\medskip

\noindent Valeriy G. Bardakov \\
Sobolev Institute of Mathematics,  Acad. Koptyug ave. 4, 630090 Novosibirsk, Russia; \\
Novosibirsk State Agrarian University,
Dobrolyubova str., 160, 630039 Novosibirsk; \\
Regional Scientific and Educational Mathematical Center of Tomsk State University; \\
Lenin ave. 36, 634009 Tomsk, Russia \\
email: bardakov@math.nsc.ru


\medskip
\noindent Igor Nikonov \\
Lomonosov Moskow  State University \\
email: nikonov@mech.math.msu.su

\end{document}